\numberwithin{equation}{section}
\numberwithin{figure}{section}
\theoremstyle{plain}
\newtheorem{lem}{\protect\lemmaname}
\theoremstyle{remark}
\newtheorem*{rem*}{\protect\remarkname}
\theoremstyle{definition}
\newtheorem{defn}{\protect\definitionname}
\theoremstyle{plain}
\newtheorem{thm}{\protect\theoremname}
\theoremstyle{definition}
 \newtheorem{example}{\protect\examplename}
\setlist{nosep}
\tikzset{every picture/.style={/utils/exec={\sffamily}}}
\providecommand{\definitionname}{Definition}
\providecommand{\examplename}{Example}
\providecommand{\lemmaname}{Lemma}
\providecommand{\remarkname}{Remark}
\providecommand{\theoremname}{Theorem}
\begin{document}
\global\long\def\R{\mathbb{R}}%

\title{The Distance to the Boundary with respect to the Minkowski Functional
of a Polytope}
\author{Mohammad Safdari} 
\begin{abstract}
We study the regularity of the distance function to the boundary of
a domain in $\mathbb{R}^{n}$, with respect to the Minkowski functional of a convex polytope. We obtain the regularity of the distance function in certain cases. We also explicitly compute the distance function in a collection of examples and observe the new interesting phenomena that arise for such distance functions.  
\end{abstract}

\maketitle

\section{Introduction}

The distance function from the boundary of a domain
appears in a wide range of areas in analysis, geometry, and applied mathematics. 
\citet{MR2336304} studied
the distance function, and used it to analyze partial differential equations (PDEs) of Monge\textendash Kantorovich
type arising in optimal transport theory. \citet{MR2094267} studied
the distance function in the framework of Hamilton-Jacobi equations
and Finsler geometry. \citet{itoh2001lipschitz}, and \citet{mantegazza2003hamilton}
studied the distance function in Riemannian manifolds; and \citet{CHRUSCIEL20021}
considered the case of Lorentzian space-times. On the other hand,
\citet{clarke1995proximal} and \citet{poliquin2000local} studied
the distance function in the context of nonsmooth analysis in Hilbert
spaces. More recently, \citet{miura2024delta}
 studied the fine structure of the singular set of distance functions in Finsler manifolds; and \citet{he2023hierarchical} employed distance functions in their work on robot manipulation under uncertainty.

In {[}\citealp{Safdari20151}\nocite{safdari2017shape,MR1}\textendash \citealp{SAFDARI202176}{]},
we examined the distance function and used it to study variational
problems and PDEs with gradient constraint, along with their corresponding
free boundaries. This analysis relied heavily on the properties of the distance functions.
In particular, we were able to find an explicit
formula for the second derivatives of the distance functions and a monotonicity property for these second derivatives, which were very crucial in our analysis.
To the best of author's knowledge, formulas of this kind have not
appeared in the literature before, except for the simple case of the
Euclidean distance to the boundary. In addition, we 
completely characterized the set of singularities of the distance function using our explicit formula for its second derivative. In \cite{Paper-4}, we studied the regularity of distance functions
in two dimensions. Here we also allowed the boundary of the domain to have corners.

In this work, we examine the distance function to the boundary of a domain with respect to the Minkowski functional of a convex polytope. We obtain the regularity of the distance function in certain cases. We also explicitly compute the distance function in a collection of examples and observe the new interesting phenomena that arise for such distance functions. We hope that this initial work cultivates interest in this novel topic and motivates its further study.

\section{\protect\label{subsec:Reg_gaug}Regularity of the gauge
function}

Let $K$ be a compact convex subset of $\mathbb{R}^{n}$ whose interior
contains the origin. We recall from convex analysis (see \citep{MR3155183})
that the \textbf{gauge} function (or the \textbf{Minkowski functional})
of $K$ is the convex function 
\begin{equation}
\gamma(x)=\gamma_{K}(x):=\inf\{\lambda>0: \tfrac{1}{\lambda} x \in K\}.\label{eq:gauge}
\end{equation}
The gauge function $\gamma$ is subadditive and positively 1-homogenous:
\begin{align*}
 & \gamma(rx)=r\gamma(x),\\
 & \gamma(x+y)\le\gamma(x)+\gamma(y),
\end{align*}
for all $x,y\in\mathbb{R}^{n}$ and $r\ge0$. So $\gamma$ looks like
a norm on $\mathbb{R}^{n}$, except that $\gamma(-x)$ is not necessarily
the same as $\gamma(x)$. 
It is also easy to see that 
\[
K=\{\gamma\le1\},\qquad\partial K=\{\gamma=1\},\qquad\mathrm{int}(K)=\{\gamma<1\}.
\]

Let us also recall that the \textbf{support function} of $K$ is the
convex function 
\begin{equation}
h(x)=h_{K}(x):=\sup_{y\in K}\langle x,y\rangle,\label{eq:spt_funct}
\end{equation}
where $\langle\,,\rangle$ is the standard inner product on $\mathbb{R}^{n}$. Another notion is that of the \textbf{polar} of $K$ 
\begin{equation}
K^{\circ}:=\{x:\langle x,y\rangle\leq1\,\textrm{ for all }y\in K\}.\label{eq:K0}
\end{equation}
The set $K^{\circ}$, too, is a compact convex set containing the origin as
an interior point, and satisfies $K^{\circ\circ}=K$ (see Theorem
1.6.1 of \citep{MR3155183}). %
We will denote the gauge function and the support function of $K^{\circ}$
by 
\[
\gamma^{\circ}:=\gamma_{K^{\circ}},\qquad h^{\circ}:=h_{K^{\circ}},
\]
respectively. 
\begin{lem}
\label{lem:g_is_h0}We have 
\begin{equation}
\gamma(x)=h^{\circ}(x)=\max_{y\in K^{\circ}}\langle x,y\rangle.\label{eq:g_is_h0}
\end{equation}
\end{lem}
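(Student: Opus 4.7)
The plan is to prove the two inequalities $\gamma(x) \le h^\circ(x)$ and $h^\circ(x) \le \gamma(x)$ separately, using the bipolar identity $K^{\circ\circ}=K$ quoted from Theorem 1.6.1 of \citep{MR3155183}. Before addressing these, I would observe that the supremum in $h^\circ(x)=\sup_{y\in K^\circ}\langle x,y\rangle$ is attained: $K^\circ$ is compact (as noted in the excerpt) and $y\mapsto \langle x,y\rangle$ is continuous, so the sup is really a max, which disposes of the second equality in \eqref{eq:g_is_h0} once the first is proved.

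For the inequality $h^\circ(x)\le\gamma(x)$, I would take $x\ne 0$ and set $\lambda=\gamma(x)>0$ (positivity follows from $0\in\mathrm{int}(K)$, so that $\gamma$ vanishes only at $0$). By definition of $\gamma$ and the closedness of $K$, we have $x/\lambda\in K=K^{\circ\circ}$. Then for every $y\in K^\circ$ the definition of $K^{\circ\circ}$ yields $\langle x/\lambda,y\rangle\le 1$, i.e.\ $\langle x,y\rangle\le\lambda=\gamma(x)$; taking the sup over $y\in K^\circ$ gives $h^\circ(x)\le\gamma(x)$.

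For the reverse inequality $\gamma(x)\le h^\circ(x)$, I would fix any $s>h^\circ(x)$, so that $\langle x,y\rangle<s$ for all $y\in K^\circ$, i.e.\ $\langle x/s,y\rangle\le 1$ for every $y\in K^\circ$. This is precisely the condition that $x/s\in K^{\circ\circ}=K$, and hence $\gamma(x)\le s$. Letting $s\downarrow h^\circ(x)$ produces $\gamma(x)\le h^\circ(x)$. The case $x=0$ is trivial since both sides vanish.

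There is not really a hard step here; the only mildly delicate point is making sure one is allowed to divide by $s$ and by $\lambda$, which amounts to noting that $\gamma$ and $h^\circ$ vanish only at the origin (the former because $0\in\mathrm{int}(K)$, the latter because $0\in\mathrm{int}(K^\circ)$, so $K^\circ$ contains a small ball on which $\langle x,\cdot\rangle$ is positive for any $x\neq 0$). Everything else is a direct unwinding of the definitions of $\gamma$, $h^\circ$, and $K^\circ$ combined with bipolar duality.
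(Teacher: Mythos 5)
Your proof is correct and follows essentially the same route as the paper's: the inequality $h^{\circ}(x)\le\gamma(x)$ comes from $x/\gamma(x)\in K$ together with the definition of the polar, and the reverse inequality comes from placing a suitable multiple of $x$ in $K^{\circ\circ}=K$. Your use of an auxiliary $s>h^{\circ}(x)$ followed by a limiting argument, rather than dividing by $h^{\circ}(x)$ directly as the paper does, is only a cosmetic variation (and in fact makes the positivity issue slightly more transparent).
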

\begin{rem*}
Similarly we have 
\begin{equation}
\gamma^{\circ}(x)=h(x)=\max_{y\in K}\langle x,y\rangle.\label{eq:g0_is_h}
\end{equation}
\end{rem*}
\begin{proof}
First note that we can use $\max$ instead of $\sup$ since $K^{\circ}$
is compact. The equality also holds trivially for $x=0$. Now note
that for $x\ne0$ we have $x/\gamma(x)\in\partial K\subset K$. Hence,
by the definition of the polar set, for $y\in K^{\circ}$ we have
\[
\langle x/\gamma(x),y\rangle\le1\implies\langle x,y\rangle\le\gamma(x)\implies h^{\circ}(x)\le\gamma(x).
\]
On the other hand note that for every $y\in K^{\circ}$ we have 
\begin{align*}
\langle x,y\rangle\le h^{\circ}(x)\implies\langle x/h^{\circ}(x),y\rangle\le1 & \implies x/h^{\circ}(x)\in K^{\circ\circ}=K\\
 & \implies\gamma\big(x/h^{\circ}(x)\big)\le1\implies\gamma(x)\le h^{\circ}(x),
\end{align*}
which gives the desired. 
\end{proof}
As a consequence of the above lemma, for all $x,y\in\mathbb{R}^{n}$
we have 
\begin{equation}
\langle x,y\rangle\leq\gamma(x)\gamma^{\circ}(y).\label{eq:gen_Cauchy-Schwartz}
\end{equation}
In fact, more is true and we have 
\begin{equation}
\gamma^{\circ}(y)=\underset{x\ne0}{\max}\frac{\langle x,y\rangle}{\gamma(x)}.\label{eq:gen_Cauchy-Schwartz_2}
\end{equation}
To see this note that for $x\ne0$ we have $x/\gamma(x)\in\partial K$,
so 
\[
\underset{x\ne0}{\max}\frac{\langle x,y\rangle}{\gamma(x)}=\underset{x\ne0}{\max}\langle x/\gamma(x),y\rangle\le\max_{z\in K}\langle z,y\rangle=\gamma^{\circ}(y).
\]
On the other hand, for $z\in K$ we have $\gamma(z)\le1$; thus 
\[
\gamma^{\circ}(y)=\max_{z\in K}\langle z,y\rangle\le\underset{0\ne z\in K}{\max}\frac{\langle z,y\rangle}{\gamma(z)}\le\underset{x\ne0}{\max}\frac{\langle x,y\rangle}{\gamma(x)}.
\]

Next, let us review some other well-known facts from convex analysis.
Let $x\in\partial K$ and $v\in\R^{n}-\{0\}$. We say the hyperplane
\begin{equation}
H_{x,v}:=\{y\in\R^{n}:\langle y-x,v\rangle=0\}\label{eq:Hyperplane}
\end{equation}
is a \textbf{supporting hyperplane} of $K$ at $x$ if $K\subset\{y:\langle y-x,v\rangle\le0\}$.
In this case we say $v$ is an \textbf{outer normal vector} of $K$
at $x$. The \textbf{normal cone} of $K$ at $x$ is the closed convex
cone 
\begin{equation}
N(K,x):=\{0\}\cup\{v\in\R^{n}-\{0\}:v\textrm{ is an outer normal vector of }K\textrm{ at }x\}.\label{eq:Normal_cone}
\end{equation}
Since $K$ is convex, and $\emptyset\ne K\ne\R^{n}$, there is at
least one supporting hyperplane of $K$ at $x$; thus $N(K,x)$ contains
at least one nonzero element. It is also easy to see that when $\partial K$
is $C^{1}$ (which implies $\gamma$ is $C^{1}$ by Lemma \ref{lem:g_is_smooth})
we have 
\[
N(K,x)=\{tD\gamma(x):t\ge0\}.
\]
If $\dim N(K,x)\ge2$ we say $x$ is a \textbf{singular} point of
$\partial K$ (the dimension of a cone is the smallest dimension of
a subspace of $\R^{n}$ that contains the cone). Otherwise, if $\dim N(K,x)=1$
we say $x$ is a \textbf{smooth} or \textbf{regular} point of $\partial K$.
For more details see {[}\citealp{MR3155183}, Sections 1.3 and 2.2{]}.

Let us also review the definition of the \textbf{subdifferential}
of a convex function, like $\gamma$: 
\[
\partial\gamma(x):=\{v\in\R^{n}:\forall y\in\R^{n}\;\gamma(x)+\langle y-x,v\rangle\le\gamma(y)\}.
\]
The elements of the subdifferential are called \textbf{subgradients}.
It is easy to see that the subdifferential is a convex set. A convex
function such as $\gamma$ is differentiable at $x$ if and only if
$\partial\gamma(x)$ has only one element, namely $v=D\gamma(x)$;
for the proof see Theorem 1.5.15 of \citep{MR3155183}. 
\begin{lem}
For $x\ne0$ we have 
\begin{equation}
\partial\gamma(x)=N\big(K,x/\gamma(x)\big)\cap\partial K^{\circ}.\label{eq:dg}
\end{equation}
And for $x=0$ we have $\partial\gamma(0)=K^{\circ}$. 

It then follows that for $v\in\partial\gamma(x)$ we have 
\begin{equation}
\gamma^{\circ}(v)=1,\qquad\langle x,v\rangle=\gamma(x).\label{eq:v.x_is_g(x)}
\end{equation}
In particular if $\gamma$ is differentiable at $x$ we have $D\gamma(x)\in\partial K^{\circ}$,
and 
\begin{equation}
\langle D\gamma(x),x\rangle=\gamma(x).\label{eq:Euler_formula}
\end{equation}
\end{lem}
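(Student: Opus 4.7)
The plan is to treat the case $x=0$ separately using the identification $\gamma = h^{\circ}$ from Lemma \ref{lem:g_is_h0}, and then for $x \neq 0$ extract the identity $\langle x, v\rangle = \gamma(x)$ as the bridge linking the subgradient inequality to the normal-cone and polar-boundary conditions.

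The case $x=0$ is immediate: $v \in \partial\gamma(0)$ means $\langle y, v\rangle \leq \gamma(y) = \max_{z \in K^{\circ}}\langle y, z\rangle$ for every $y$, and since $K^{\circ}$ is closed and convex (hence equal to the intersection of its supporting half-spaces $\{v : \langle y, v\rangle \leq h^{\circ}(y)\}$), this is equivalent to $v \in K^{\circ}$.

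For $x \neq 0$ I would prove both inclusions. Given $v \in \partial\gamma(x)$, the key step is to exploit the $1$-homogeneity of $\gamma$ by plugging $y=0$ and $y=2x$ into the subgradient inequality $\gamma(x) + \langle y-x, v\rangle \leq \gamma(y)$; the two choices yield $\langle x, v\rangle \geq \gamma(x)$ and $\langle x, v\rangle \leq \gamma(x)$ respectively, forcing $\langle x, v\rangle = \gamma(x)$. Substituting this back reduces the subgradient inequality to $\langle y, v\rangle \leq \gamma(y)$ for all $y$, so $v \in K^{\circ}$ by the $x=0$ case; then applying (\ref{eq:gen_Cauchy-Schwartz}) to $\gamma(x) = \langle x, v\rangle \leq \gamma(x)\gamma^{\circ}(v)$ forces $\gamma^{\circ}(v) \geq 1$, hence $\gamma^{\circ}(v)=1$ and $v \in \partial K^{\circ}$. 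The outer-normal property at $x/\gamma(x)$ then follows because $\langle y, v\rangle \leq \gamma(y) \leq 1 = \langle x/\gamma(x), v\rangle$ for every $y \in K$.

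For the reverse direction, given $v \in N(K, x/\gamma(x)) \cap \partial K^{\circ}$, I would combine $v \in K^{\circ}$ (giving $\langle z, v\rangle \leq 1$ for $z \in K$) with the outer-normal property (giving $\langle x/\gamma(x), v\rangle \geq \max_{z \in K}\langle z, v\rangle = \gamma^{\circ}(v) = 1$ via (\ref{eq:g0_is_h})) to conclude $\langle x, v\rangle = \gamma(x)$. Adding this identity to $\langle y, v\rangle \leq \gamma(y)$ recovers the subgradient inequality, so $v \in \partial\gamma(x)$. The consequences (\ref{eq:v.x_is_g(x)}) and (\ref{eq:Euler_formula}) are already established in the course of the proof, and the differentiability case invokes only the standard fact that a convex function is differentiable at $x$ precisely when its subdifferential there is a singleton. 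The main obstacle is spotting the test points $y=0$ and $y=2x$ that together extract $\langle x, v\rangle = \gamma(x)$; after that, the rest follows formally from the duality encoded in Lemma \ref{lem:g_is_h0} and (\ref{eq:gen_Cauchy-Schwartz}).
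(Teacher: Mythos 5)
Your proposal is correct and follows essentially the same route as the paper: both inclusions are verified directly from the subgradient inequality using well-chosen test points together with the duality $\gamma=h^{\circ}$ of Lemma \ref{lem:g_is_h0} and the inequality $\langle x,y\rangle\le\gamma(x)\gamma^{\circ}(y)$. The only cosmetic difference is that you extract the identity $\langle x,v\rangle=\gamma(x)$ first (via the test points $y=0$ and $y=2x$) and then reduce everything to the $x=0$ case, whereas the paper reaches the same three facts through the separate substitutions $y\mapsto\gamma(x)y$, $y=x+z$, and $y=0$.
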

%
\begin{rem*} 
This lemma has the following consequences: 
\begin{enumerate}

\item [1.] $\gamma$ is never differentiable at $x=0$, since $\partial\gamma(0)=K^{\circ}$ has
more than one element.  

\item [2.] On the set of differentiability of $\gamma$ we have
\begin{equation}
\gamma^{\circ}(D\gamma)=1,\label{eq:g0(Dg)_is_1}
\end{equation}
and thus $D\gamma\ne0$ when it exists. 

\item [3.] We have 
\[
\partial\gamma(tx)=\partial\gamma(x)
\]
for $t>0$. In particular, if $\gamma$ is differentiable at $x\ne0$,
then it is differentiable at $tx$ for $t>0$ and we have 
\begin{equation}
D\gamma(tx)=D\gamma(x).\label{eq:Homog}
\end{equation}

\item [4.] When $z$ is a smooth point of $\partial K$ we have
\[
\partial\gamma(z)=N(K,z)\cap\partial K^{\circ}=\{tv\} \cap \partial K^{\circ}
\]
for some nonzero $v$. So $\partial\gamma(z)$ has only one element, as $\gamma^{\circ}(tv)=1$ for only one $t>0$. Therefore $\gamma$ is differentiable at
$z$. 
\end{enumerate}
\end{rem*}
\begin{proof}
For $x=0$ and $v\in\partial\gamma(0)$ we have 
\[
\langle y,v\rangle=\gamma(0)+\langle y-0,v\rangle\le\gamma(y)
\]
for every $y\in\R^{n}$. Thus by (\ref{eq:gen_Cauchy-Schwartz_2})
we must have $\gamma^{\circ}(v)\le1$ and thus $v\in K^{\circ}$.
Conversely, for $v\in K^{\circ}$ and $y\in\R^{n}$ we have 
\[
\gamma(0)+\langle y-0,v\rangle=\langle y,v\rangle\le\gamma(y)\gamma^{\circ}(v)\le\gamma(y).
\]
So we get $v\in\partial\gamma(0)$, as wanted. 

Now suppose $x\ne0$. First assume that $v\in\partial\gamma(x)$.
We know that 
\begin{equation}
\gamma(x)+\langle y-x,v\rangle\le\gamma(y)\label{eq:subdiff}
\end{equation}
for every $y\in\R^{n}$. We need to show that for $y\in K$ we have
$\langle y-x/\gamma(x),v\rangle\le0$. To see this note that by replacing
$y$ with $\gamma(x)y$ in (\ref{eq:subdiff}) we get $\langle\gamma(x)y-x,v\rangle\le\gamma\big(\gamma(x)y\big)-\gamma(x)$;
and thus 
\begin{align*}
\langle y-x/\gamma(x),v\rangle=\frac{1}{\gamma(x)}\langle\gamma(x)y-x,v\rangle & \le\frac{1}{\gamma(x)}\big[\gamma\big(\gamma(x)y\big)-\gamma(x)\big]\\
 & =\frac{1}{\gamma(x)}\big[\gamma(x)\gamma(y)-\gamma(x)\big]=\gamma(y)-1\le0,
\end{align*}
since $\gamma(y)\le1$ for $y\in K$. In addition note that if we
set $y=x+z$ in (\ref{eq:subdiff}) we obtain 
\[
\gamma(x)+\langle z,v\rangle=\gamma(x)+\langle y-x,v\rangle\le\gamma(y)=\gamma(x+z)\le\gamma(x)+\gamma(z).
\]
Hence $\langle z,v\rangle\le\gamma(z)$ for every $z$, and therefore
$\gamma^{\circ}(v)\le1$ by (\ref{eq:gen_Cauchy-Schwartz_2}). And
if we set $y=0$ we get 
\[
\gamma(x)+\langle0-x,v\rangle\le\gamma(0)\implies\gamma(x)\le\langle x,v\rangle.
\]
Thus we must have $\gamma^{\circ}(v)\ge1$, again by (\ref{eq:gen_Cauchy-Schwartz_2}).
Hence $\gamma^{\circ}(v)=1$ and therefore $v\in\partial K^{\circ}$.

Conversely, suppose $v\in N\big(K,x/\gamma(x)\big)\cap\partial K^{\circ}$.
We need to show that (\ref{eq:subdiff}) holds for every $y$. We
know that for $y\in K$ we have 
\begin{align*}
\langle y-x/\gamma(x),v\rangle\le0 & \implies\langle\gamma(x)y-x,v\rangle\le0\\
 & \implies\gamma(x)\langle y,v\rangle\le\langle x,v\rangle.
\end{align*}
However if we choose $y$ so that $\langle y,v\rangle=\max_{z\in K}\langle z,v\rangle=\gamma^{\circ}(v)=1$
(see Lemma \ref{lem:g_is_h0}), then we get $\gamma(x)\le\langle x,v\rangle$.
Thus for every $y\in\R^{n}$ we have 
\[
\gamma(x)+\langle y-x,v\rangle\le\langle x,v\rangle+\langle y-x,v\rangle=\langle y,v\rangle\le\gamma(y)\gamma^{\circ}(v)=\gamma(y),
\]
as desired. 

Finally, to prove (\ref{eq:v.x_is_g(x)}), note that if we set $y=0$
in the definition of the subgradient $v\in\partial\gamma(x)$ we obtain
$\gamma(x)-\langle x,v\rangle\le0$. On the other hand, we know that
$v\in\partial K^{\circ}$; hence $\gamma^{\circ}(v)=1$ and thus $\langle x,v\rangle\le\gamma(x)\gamma^{\circ}(v)=\gamma(x)$.
Therefore we must also have $\langle x,v\rangle=\gamma(x)$, as desired.
\end{proof}
\begin{lem}
\label{lem:x_in_dg(v)} For $x\in\partial K$
and $v\in\partial K^{\circ}$ we have 
\[
v\in\partial\gamma(x)\iff x\in\partial\gamma^{\circ}(v).
\]
As a result, for $x,v \in\R^{n} -\{0\}$ we have 
\[
\frac{v}{\gamma^{\circ}(v)}\in\partial\gamma(x)\iff\frac{x}{\gamma(x)}\in\partial\gamma^{\circ}(v).
\]
In particular, if $\gamma$ is differentiable at $x$ we have 
\[
\frac{x}{\gamma(x)}\in\partial\gamma^{\circ}\big(D\gamma(x)\big),
\]
and if in addition $\gamma^{\circ}$ is differentiable at $D\gamma(x)$
we have 
\begin{equation}
\frac{x}{\gamma(x)}=D\gamma^{\circ}\big(D\gamma(x)\big).\label{eq:Dg(Dg0)}
\end{equation}
\end{lem}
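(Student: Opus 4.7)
The plan is to reduce the first biconditional to the simple condition $\langle x,v\rangle = 1$ using the previous lemma, then bootstrap to the general case by the positive homogeneity of the subdifferential, and finally squeeze out the differentiability statements from the fact that subdifferentials at points of differentiability are singletons.

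First I would observe that by the previous lemma applied to $\gamma$ and (using $K^{\circ\circ}=K$) to $\gamma^{\circ}$, for $x \in \partial K$ and $v \in \partial K^{\circ}$ (so that $\gamma(x)=\gamma^{\circ}(v)=1$) we have
\[
\partial\gamma(x) = N(K,x)\cap \partial K^{\circ}, \qquad \partial\gamma^{\circ}(v) = N(K^{\circ},v)\cap \partial K.
\]
Since $v \in \partial K^{\circ}$ is given, $v \in \partial\gamma(x)$ is equivalent to $v \in N(K,x)$, which in turn is equivalent to $\langle y,v\rangle \le \langle x,v\rangle$ for all $y\in K$. The maximum of $\langle \cdot,v\rangle$ over $K$ is $h(v)=\gamma^{\circ}(v)=1$ by Lemma \ref{lem:g_is_h0}, so this is equivalent to $\langle x,v\rangle = 1$. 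By the same reasoning with the roles of $K$ and $K^{\circ}$ swapped, $x \in \partial\gamma^{\circ}(v)$ is also equivalent to $\langle x,v\rangle = 1$. This yields the first biconditional.

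For general nonzero $x,v$, I would use Remark part 3 (equation (\ref{eq:Homog}) and the preceding identity $\partial\gamma(tx)=\partial\gamma(x)$ for $t>0$) together with its analogue for $\gamma^{\circ}$: the points $\tilde{x}:=x/\gamma(x) \in \partial K$ and $\tilde{v}:=v/\gamma^{\circ}(v) \in \partial K^{\circ}$ satisfy $\partial\gamma(\tilde{x})=\partial\gamma(x)$ and $\partial\gamma^{\circ}(\tilde{v})=\partial\gamma^{\circ}(v)$. So $\tilde{v}\in\partial\gamma(x)$ is the same as $\tilde{v}\in\partial\gamma(\tilde{x})$, which by the first part is equivalent to $\tilde{x}\in\partial\gamma^{\circ}(\tilde{v})=\partial\gamma^{\circ}(v)$, giving the second equivalence.

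Finally, if $\gamma$ is differentiable at $x$ then by equation (\ref{eq:g0(Dg)_is_1}) we have $\gamma^{\circ}(D\gamma(x))=1$, so $D\gamma(x)/\gamma^{\circ}(D\gamma(x))=D\gamma(x)$ is the unique element of $\partial\gamma(x)$; applying the scaled equivalence with $v=D\gamma(x)$ yields $x/\gamma(x) \in \partial\gamma^{\circ}(D\gamma(x))$. If additionally $\gamma^{\circ}$ is differentiable at $D\gamma(x)$, then this subdifferential is the singleton $\{D\gamma^{\circ}(D\gamma(x))\}$, which forces the identity (\ref{eq:Dg(Dg0)}). The only subtlety worth guarding against is the bookkeeping when passing between $K$ and $K^{\circ}$; everything else is a direct consequence of the preceding lemma and homogeneity.
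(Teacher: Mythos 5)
Your proof is correct, but it is organized differently from the paper's. The paper proves the forward implication by taking $v\in\partial\gamma(x)$, extracting $\langle x,v\rangle=\gamma(x)=1$ from (\ref{eq:v.x_is_g(x)}), and then verifying the subgradient inequality $\gamma^{\circ}(v)+\langle w-v,x\rangle\le\gamma^{\circ}(w)$ for all $w$ by a direct computation that ends with the generalized Cauchy--Schwarz inequality (\ref{eq:gen_Cauchy-Schwartz}); the reverse implication is declared symmetric. You instead never touch the subgradient inequality: you invoke the normal-cone description $\partial\gamma(x)=N(K,x)\cap\partial K^{\circ}$ from (\ref{eq:dg}) (and its polar counterpart, which is legitimate since $K^{\circ}$ is again a compact convex body with $0$ in its interior and $K^{\circ\circ}=K$), and reduce both memberships to the single symmetric scalar condition $\langle x,v\rangle=1$ via the support-function identity (\ref{eq:g0_is_h}) and the bound $\langle x,v\rangle\le\gamma(x)\gamma^{\circ}(v)=1$. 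Your route makes the self-duality of the statement completely transparent and dispatches both directions at once, at the cost of leaning on the full strength of (\ref{eq:dg}) rather than only the consequence (\ref{eq:v.x_is_g(x)}); the paper's computation is more self-contained but hides the symmetry in a ``similarly for the converse.'' Your handling of the scaled version via $\partial\gamma(tx)=\partial\gamma(x)$ and of the differentiability statements via (\ref{eq:g0(Dg)_is_1}) and the singleton property of subdifferentials at points of differentiability coincides with what the paper does.
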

\begin{proof}
To prove the first assertion suppose $v\in\partial\gamma(x)$. We want to show that $x\in\partial\gamma^{\circ}(v)$,
i.e. for every $w$ we have 
\[
\gamma^{\circ}(v)+\langle w-v,x\rangle\le\gamma^{\circ}(w).
\]
To see this note that 
\begin{align*}
\gamma^{\circ}(v)+\langle w-v,x\rangle & =\gamma^{\circ}(v)+\langle w,x\rangle-\langle v,x\rangle\\
 & =\gamma^{\circ}(v)+\langle w,x\rangle-\gamma(x)\tag{by (\ref{eq:v.x_is_g(x)})}\\
 & =1+\langle w,x\rangle-1=\langle w,x\rangle\le\gamma^{\circ}(w)\gamma(x)=\gamma^{\circ}(w),
\end{align*}
as desired. The reverse implication can be proved similarly. The other
assertions of the lemma follow easily, noting that $\partial\gamma$
(and $D\gamma$) are positively 0-homogeneous (see (\ref{eq:Homog})). 
\end{proof}
\begin{rem*}
As a consequence of the above two lemmas, for $x\in\partial K$ and
$v\in\partial K^{\circ}$ we have 
\begin{equation}
v\in N(K,x)\iff x\in N(K^{\circ},v),\label{eq:v_in_N(x)}
\end{equation}
i.e.\@ $v$ is an outer normal vector of $K$ at $x$ if and only if
$x$ is an outer normal vector of $K^{\circ}$ at $v$. 
\end{rem*}
\begin{lem}
\label{lem:g0_diff_on_N}Let $x\in\partial K$.
If the normal cone $N(K,x)$ has nonempty interior, then an open subset
of $\partial K^{\circ}$ is flat, and $x$ is orthogonal to it. As
a result, $\gamma^{\circ}$ is differentiable on the interior of $N(K,x)$,
and for any $v\in\mathrm{int}\big(N(K,x)\big)$ we have 
\[
D\gamma^{\circ}(v)=x.
\]
\end{lem}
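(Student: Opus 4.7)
The plan is to show that $\gamma^{\circ}$ reduces to a single linear function on the whole normal cone $N(K,x)$, and then to extract all three claims from this observation. The key identity to establish first is
\[
\gamma^{\circ}(v)=\langle x,v\rangle\qquad\text{for every }v\in N(K,x).
\]
To see this, let $v\in N(K,x)\setminus\{0\}$. By definition $\langle y-x,v\rangle\le0$ for every $y\in K$, hence $\max_{y\in K}\langle y,v\rangle=\langle x,v\rangle$ (the maximum is attained at $y=x\in K$). The formula then follows from $\gamma^{\circ}=h$, i.e.\@ the Remark after Lemma~\ref{lem:g_is_h0}.

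Next I would extract the flat face of $\partial K^{\circ}$. Let $U:=\mathrm{int}\,N(K,x)$, which by hypothesis is a nonempty open cone (openness is preserved under positive scaling since $v\mapsto tv$ is a homeomorphism of $\R^n$ for $t>0$). Pick any $v_0\in U$; then $\gamma^{\circ}(v_0)=\langle x,v_0\rangle>0$, so $v_0/\gamma^{\circ}(v_0)\in U$ as well. The intersection $U\cap\{v:\langle x,v\rangle=1\}$ is therefore a nonempty relatively open subset of the hyperplane $\{\langle x,\cdot\rangle=1\}$, and on it $\gamma^{\circ}\equiv1$, so it is contained in $\partial K^{\circ}$. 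This exhibits an $(n-1)$-dimensional flat piece of $\partial K^{\circ}$ with outer normal $x$.

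Finally I would derive the differentiability statement from the linearity on $U$. Because $\gamma^{\circ}$ is convex and coincides on the open set $U$ with the linear function $\ell(v):=\langle x,v\rangle$, any subgradient $w\in\partial\gamma^{\circ}(v_0)$ at a point $v_0\in U$ must satisfy
\[
\ell(v)=\gamma^{\circ}(v)\ge\gamma^{\circ}(v_0)+\langle w,v-v_0\rangle=\ell(v_0)+\langle w,v-v_0\rangle
\]
for all $v$ in a neighborhood of $v_0$ inside $U$. This rearranges to $\langle x-w,v-v_0\rangle\ge0$ in a full neighborhood of $v_0$, which forces $w=x$. Hence $\partial\gamma^{\circ}(v_0)=\{x\}$, and by the characterization of differentiability of convex functions (Theorem~1.5.15 of \citep{MR3155183}) $\gamma^{\circ}$ is differentiable at $v_0$ with $D\gamma^{\circ}(v_0)=x$.

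There is no serious obstacle here: the only conceptual step is recognizing that $v\in N(K,x)$ says exactly that the maximum defining $h(v)=\gamma^{\circ}(v)$ is attained at $x$, which turns the restriction $\gamma^{\circ}|_{N(K,x)}$ into an honest linear function. Everything else is bookkeeping around the fact that convex functions that are linear on an open set are automatically $C^1$ there.
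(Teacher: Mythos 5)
Your proof is correct, but it takes a genuinely different route from the paper's. The paper first passes to the polar duality of normal cones \eqref{eq:v_in_N(x)} to conclude that $x$ is an outer normal to $\partial K^{\circ}$ at every point of $\mathrm{int}\big(N(K,x)\big)\cap\partial K^{\circ}$, deduces flatness from the fact that all these points share the supporting hyperplane orthogonal to $x$, then invokes Lemma \ref{lem:g_is_smooth} to obtain differentiability of $\gamma^{\circ}$ near such points, and finally identifies the derivative via \eqref{eq:dg} and Lemma \ref{lem:x_in_dg(v)}. You instead observe that $v\in N(K,x)$ says precisely that the supremum defining $h(v)=\gamma^{\circ}(v)$ is attained at $x$, so that $\gamma^{\circ}(v)=\langle x,v\rangle$ on all of $N(K,x)$; both the flat face (the level set $\{\langle x,\cdot\rangle=1\}$ inside the open cone, which is exactly $\mathrm{int}\big(N(K,x)\big)\cap\partial K^{\circ}$) and the differentiability with $D\gamma^{\circ}=x$ (a convex function agreeing with a linear function on an open set has that linear function's gradient as its unique subgradient there) then fall out of this single identity. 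Your argument is more elementary and self-contained --- it bypasses Lemmas \ref{lem:g_is_smooth} and \ref{lem:x_in_dg(v)} entirely, in particular avoiding the implicit-function-theorem machinery behind Lemma \ref{lem:g_is_smooth} --- while the paper's version keeps the polar-duality picture in the foreground, which is the viewpoint it reuses in the polytope section. One cosmetic remark: when you ``pick any $v_{0}\in U$'' and divide by $\gamma^{\circ}(v_{0})$, you should note that $0\notin\mathrm{int}\big(N(K,x)\big)$ (otherwise $N(K,x)=\R^{n}$, forcing $K=\{x\}$, contradicting that $K$ has nonempty interior), so $v_{0}\neq0$ and $\gamma^{\circ}(v_{0})>0$; this is immediate but worth a word.
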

\begin{rem*}
This is particularly the case when $K$ is a polytope and $x$ is
a vertex of $K$ (see the following subsection on polytopes).
\end{rem*}
\begin{proof}
Let $v$ be an interior point of the cone $N(K,x)$, i.e. we have
$B_{r}(v)\subset N(K,x)$ for some $r>0$. It then easily follows
that $B_{s}(v/\gamma^{\circ}(v))\subset N(K,x)$ for $s=r/\gamma^{\circ}(v)$.
Then by (\ref{eq:v_in_N(x)}) we have $x\in N(K^{\circ},w)$ for every
$w\in B_{s}(v/\gamma^{\circ}(v))\cap\partial K^{\circ}$. This means
that $x$ is orthogonal to an open subset of $\partial K^{\circ}$,
namely $\mathrm{int}\big(N(K,x)\big)\cap\partial K^{\circ}$. Hence
this open subset of $\partial K^{\circ}$ is flat. The reason is that any two
points $w,\tilde{w}$ in this open subset must lie on the same side
of each of the parallel hyperplanes $H_{w,x},H_{\tilde{w},x}$ (the
side that $-x$ points to), which implies that $H_{w,x}=H_{\tilde{w},x}$.
Consequently, this hyperplane intersects $\partial K^{\circ}$ in
the aforementioned open subset, and thus that open subset is flat. 

Hence by the next lemma $\gamma^{\circ}$ is smooth around points
$v$ for which $v/\gamma^{\circ}(v)\in\mathrm{int}\big(N(K,x)\big)\cap\partial K^{\circ}$,
which is equivalent to $v\in\mathrm{int}\big(N(K,x)\big)$. Now by
(\ref{eq:dg}) and Lemma \ref{lem:x_in_dg(v)}
we have 
\[
v/\gamma^{\circ}(v)\in\partial\gamma(x)\implies x\in\partial\gamma^{\circ}(v)=\{D\gamma^{\circ}(v)\}\implies x=D\gamma^{\circ}(v),
\]
as desired. 
\end{proof}
\begin{lem}
\label{lem:g_is_smooth}Suppose $\partial K$ is
$C^{k,\alpha}$ $(k\ge1\,,\,0\le\alpha\le1)$ around $x_{0}$. Then
$\gamma$ is $C^{k,\alpha}$ on a neighborhood of each point $tx_{0}$
for $t>0$. 
\end{lem}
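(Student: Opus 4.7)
The plan is to represent $\gamma$ locally as the implicit solution of a defining equation for $\partial K$, and then invoke the $C^{k,\alpha}$ implicit function theorem. Positive $1$-homogeneity reduces the statement to the case $t=1$: if $\gamma$ is shown to be $C^{k,\alpha}$ on some neighborhood $W$ of $x_{0}$, then for any $t>0$ the identity $\gamma(y)=t\gamma(y/t)$ (which is an immediate consequence of homogeneity) shows that $\gamma$ is $C^{k,\alpha}$ on the neighborhood $tW$ of $tx_{0}$.

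By the assumed $C^{k,\alpha}$ regularity of $\partial K$ near $x_{0}$, I can choose an open neighborhood $U$ of $x_{0}$ in $\R^{n}$ and a $C^{k,\alpha}$ function $F\colon U\to\R$ with nonvanishing gradient such that $\partial K\cap U=\{F=0\}$, with $F<0$ on $\mathrm{int}(K)\cap U$ and $F>0$ on $U\setminus K$. Define
\[
G(x,\lambda):=F(x/\lambda)
\]
on a neighborhood of $(x_{0},1)$ in $\R^{n}\times(0,\infty)$. Then $G$ is $C^{k,\alpha}$, $G(x_{0},1)=F(x_{0})=0$, and
\[
\partial_{\lambda}G(x,\lambda)=-\lambda^{-2}\,\langle\nabla F(x/\lambda),x\rangle.
\]

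The key point, and the one subtlety in the argument, is transversality at $(x_{0},1)$. Since $0$ lies in the interior of $K$, the ray $t\mapsto tx_{0}$ satisfies $\gamma(tx_{0})=t$, so $tx_{0}\in\mathrm{int}(K)$ for $0<t<1$ and $tx_{0}\notin K$ for $t>1$; consequently $F(tx_{0})$ changes sign from negative to positive at $t=1$, giving
\[
\langle\nabla F(x_{0}),x_{0}\rangle=\tfrac{d}{dt}\big|_{t=1}F(tx_{0})>0,
\]
and therefore $\partial_{\lambda}G(x_{0},1)<0$. The $C^{k,\alpha}$ implicit function theorem then yields a $C^{k,\alpha}$ function $\lambda=\lambda(x)$ defined on a neighborhood of $x_{0}$ with $\lambda(x_{0})=1$ and $G(x,\lambda(x))=0$. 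For $x$ sufficiently close to $x_{0}$ the continuity of the convex function $\gamma$ ensures that $\gamma(x)$ is close to $1$ and that $x/\gamma(x)\in\partial K\cap U$, so $F(x/\gamma(x))=0$; the local uniqueness in the implicit function theorem then forces $\lambda(x)=\gamma(x)$. Hence $\gamma$ is $C^{k,\alpha}$ near $x_{0}$, and by the homogeneity reduction above the full statement follows.

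The main (and essentially only) obstacle in the argument is checking the transversality $\langle\nabla F(x_{0}),x_{0}\rangle\neq0$; this is exactly the place where the hypothesis $0\in\mathrm{int}(K)$ is used, and without it the implicit equation $F(x/\lambda)=0$ could fail to be solvable for $\lambda$ as a smooth function of $x$.
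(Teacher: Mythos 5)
Your overall strategy is essentially the one the paper uses: both proofs reduce the lemma to solving the defining equation of $\partial K$ along rays through the origin via the implicit function theorem, and both identify the same transversality condition $\langle\nabla F(x_{0}),x_{0}\rangle\neq0$ as the crux (the paper writes it as $\frac{1}{r}\langle x,Df(x)\rangle\neq0$ after passing to polar coordinates, which is only a cosmetic difference from your $\partial_{\lambda}G(x_{0},1)\neq0$). The homogeneity reduction to $t=1$, the set-up $G(x,\lambda)=F(x/\lambda)$, and the identification $\lambda(x)=\gamma(x)$ by local uniqueness are all fine.

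The gap is in your verification of transversality. From the fact that $t\mapsto F(tx_{0})$ is negative for $t<1$ and positive for $t>1$ you may only conclude that its derivative at $t=1$ is $\geq0$: a function can change sign at a point where its derivative vanishes (think of $h\mapsto h^{3}$), and the nonvanishing of $\nabla F(x_{0})$ does not rule this out, since the ray direction $x_{0}$ could a priori lie in the tangent hyperplane to $\partial K$ at $x_{0}$. So ``sign change implies strictly positive derivative'' is not a valid step, and it is exactly the step that carries the whole proof --- as you yourself note, it is the one subtlety in the argument. The correct justification, which is the one the paper gives, uses convexity rather than the sign change: $\nabla F(x_{0})$ is normal to $\partial K$ at $x_{0}$, so $H=\{y:\langle y-x_{0},\nabla F(x_{0})\rangle=0\}$ is a supporting hyperplane of $K$ at $x_{0}$; if $\langle\nabla F(x_{0}),x_{0}\rangle=0$ then $0\in H$, which contradicts $0\in\mathrm{int}(K)$ because $K$ lies entirely on one side of $H$. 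With that substitution your proof is complete.
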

\begin{proof}
Let $r=\sigma(\theta)$, for $\theta\in\mathbb{S}^{n-1}$, be the
equation of $\partial K$ in polar coordinates. Then $\sigma$ is
positive and $C^{k,\alpha}$ around $\theta_{0}=x_{0}/|x_{0}|$. To
see this note that, locally, $\partial K$ is given by a $C^{k,\alpha}$
equation $f(x)=0$. On the other hand we have $x=rX(\theta)$, for
some smooth function $X$. Hence we have $f(rX(\theta))=0$; and the
derivative of this expression with respect to $r$ is 
\[
\bigl\langle X(\theta),Df\bigl(rX(\theta)\bigr)\bigr\rangle=\frac{1}{r}\langle x,Df(x)\rangle.
\]
But this is nonzero since $Df$ is orthogonal to $\partial K$, and
$x$ cannot be tangent to $\partial K$ (otherwise $0$ cannot be
in the interior of $K$, as $K$ lies on one side of its supporting
hyperplane at $x$). Thus we get the desired by the Implicit Function
Theorem. Now, it is straightforward to check that for a nonzero point
in $\mathbb{R}^{n}$ with polar coordinates $(s,\phi)$ we have 
\[
\gamma((s,\phi))=\frac{s}{\sigma(\phi)}.
\]
This formula easily gives the smoothness of $\gamma$ in the desired
region. 
\end{proof}

\subsection{Polytopes }

A point $z\in\partial K$ is called an \textbf{extreme point} of $K$
if it cannot be written as a convex combination of two other points
in $K$, i.e. if $z=\lambda x+(1-\lambda)y$ for some $0<\lambda<1$
and $x,y\in K$, then we must have $x=y=z$. More generally, a convex
set $F\subset\partial K$ is called a \textbf{face} of $K$ if $(x+y)/2\in F$
for $x,y\in K$ implies $x,y\in F$. The dimension of a face $F$
is the smallest dimension of an affine subspace of $\R^{n}$ that
contains $F$. The \textbf{relative interior} and \textbf{relative
boundary} of a face $F$ are the interior and the boundary of $F$
as a subset of the affine subspace of $\R^{n}$ with the smallest
dimension that contains $F$. Thus extreme points are $0$-dimensional
faces of $K$. An $(n-1)$-dimensional face is called a \textbf{facet}.%
{} For more details see {[}\citealp{MR3155183}, Section 2.1{]}.

We say $K$ is a \textbf{polytope} if $K$ has finitely many extreme
points $\{z_{1},\dots,z_{m}\}$, which are also called the \textbf{vertices}
of $K$. (Note that we are only considering $n$-dimensional polytopes
residing in $\R^{n}$, since we are only considering convex sets with
nonempty interiors.) It then follows that $K$ is the convex hull
of its finitely many vertices, i.e. every point $x\in K$ is a convex
combination of $z_{1},\dots,z_{m}$: 
\[
x=\lambda_{1}z_{1}+\dots+\lambda_{m}z_{m},\qquad\lambda_{i}\ge0,\quad\lambda_{1}+\dots+\lambda_{m}=1.
\]
Moreover, it follows that $K$ is the intersection of finitely many
closed halfspaces: 
\[
K=\bigcap_{i\le l}\{x:\langle x,v_{i}\rangle\le1\},
\]
where $v_{i}$'s are distinct. It then turns out that $K^{\circ}$
is the convex hull of $\{v_{1},\dots,v_{l}\}$; so $K^{\circ}$ is
also a polytope, and its vertices are $\{v_{1},\dots,v_{l}\}$. We
can also easily see that the facets of $K$ are 
\[
K\cap\{x:\langle x,v_{i}\rangle=1\}
\]
for $i=1,\dots,l$. Therefore, the vertices of $K^{\circ}$ are normal
vectors to the facets of $K$. Note that every point of $\partial K$
belongs to at least one facet of $K$. Also, it can be shown that
each face of $K$ is the intersection of the facets of $K$ containing
that face. In addition, each face of $K$ is the convex hull of a
subset of its vertices. For more details see {[}\citealp{MR3155183},
Section 2.4{]}. 

For a polytope $K$, the points of $\partial K$ which belong to some
face of $K$ with dimension at most $n-2$ are exactly the singular
points of $\partial K$ (see page 108 of \citep{MR3155183}). On the
other hand, the smooth points of $\partial K$ are exactly the points
lying in the relative interior of some facet of $K$. So the relative
boundaries of the facets of $K$ form the set of singular points of
$\partial K$. To see this note that by the above characterization
of $K$ and its facets, the relative boundary points of a facet $F$
must belong to the intersection of $F$ with at least one other facet,
and it is easy to see that this intersection is a face of $K$ with
dimension at most $n-2$. On the other hand, let $z$ be a point in
the relative interior of the facet $F$. Then for every $x\in F\cap B_{r}(z)$
for some small enough $r$, there is $y\in F\cap B_{r}(z)$ such that
$z=(x+y)/2$. Hence any face containing $z$ must contain $F\cap B_{r}(z)$,
and therefore must have dimension $n-1$. (Alternatively, we can see
that $K\cap B_{r}(z)$ is a half-ball, and thus $N(K,z)$ is one-dimensional.) 

Let $z$ be a vertex of a polytope $K$. We know that $z$ is a normal
vector to a facet of $K^{\circ}$. Hence by (\ref{eq:v_in_N(x)}),
for every $v$ in that facet of $K^{\circ}$ we have $v\in N(K,z)$.
In addition, no other point $w$ on $\partial K^{\circ}$ can belong
to $N(K,z)$, because then $z$ would belong to $N(K^{\circ},w)$.
However, $w$ belongs to a different facet of $K^{\circ}$, and $z$
cannot be simultaneously an outer normal vector to two facets of $K^{\circ}$.
Therefore 
\[
N(K,z)\cap\partial K^{\circ}
\]
is the facet of $K^{\circ}$ to which $z$ is normal. More generally
we have 
\begin{lem}
\label{lem:normal_to_face}Suppose $K$
is a polytope, and $z\in\partial K$. Let $F$ be the face of $K$
with smallest dimension that contains $z$, and suppose $F$ is $k$-dimensional.
Then $N(K,z)\cap\partial K^{\circ}$ is an $(n-k-1)$-dimensional
face of $K^{\circ}$. In addition, we have 
\[
N(K,z)\cap\partial K^{\circ}=\mathrm{conv}\,\{v_{i}\in\partial K^{\circ}:v_{i}\textrm{ is normal to some facet }F_{i}\supset F\},
\]
where $\mathrm{conv}\,\{v_{i}\}$ is the convex hull of $v_{i}$'s.
\end{lem}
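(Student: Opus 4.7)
The plan is to identify $N(K,z) \cap \partial K^\circ$ as an exposed face of $K^\circ$, and then use the polytope structure to pin down its vertices and compute its dimension.

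First I would observe that, since $F$ is the smallest face of $K$ containing $z$, the point $z$ lies in the relative interior of $F$, and the facets of $K$ that contain $z$ are exactly those containing $F$. Using Lemma~\ref{lem:g_is_h0}, $\max_{v\in K^\circ}\langle z,v\rangle = \gamma(z) = 1$, so $\{v : \langle z,v\rangle = 1\}$ is a supporting hyperplane of $K^\circ$. The goal is to show
$$N(K,z) \cap \partial K^\circ = \{v \in K^\circ : \langle z,v\rangle = 1\},$$
which is then an exposed face of $K^\circ$. The forward inclusion follows from (\ref{eq:dg}) (which, together with $\gamma(z) = 1$, gives $\partial\gamma(z) = N(K,z) \cap \partial K^\circ$) combined with (\ref{eq:v.x_is_g(x)}). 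For the reverse inclusion, a $v \in K^\circ$ with $\langle z,v\rangle = 1$ must lie on $\partial K^\circ$ (else $(1+\epsilon)v \in K^\circ$ would violate the maximum), and the subdifferential inequality $\gamma(z) + \langle y-z,v\rangle = \langle y,v\rangle \le \gamma(y)\gamma^\circ(v) \le \gamma(y)$ from (\ref{eq:gen_Cauchy-Schwartz}) then places $v$ in $\partial\gamma(z)$.

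Since $K^\circ$ is a polytope, the exposed face $\{v \in K^\circ : \langle z,v\rangle = 1\}$ is the convex hull of the vertices of $K^\circ$ it contains, i.e.\ those $v_i$ with $\langle z, v_i\rangle = 1$. But $\langle z, v_i\rangle = 1$ is equivalent to $z \in F_i$, and by the first observation this is equivalent to $F \subset F_i$, which yields the claimed convex hull formula.

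For the dimension, let $v_{i_1},\ldots,v_{i_r}$ be the outer normals of the facets $F_{i_j} \supset F$, with defining hyperplanes $H_j = \{\langle\cdot,v_{i_j}\rangle = 1\}$. I would first argue that $\mathrm{aff}(F) = \bigcap_j H_j$: the inclusion $\subset$ is immediate from $F = \bigcap_j F_{i_j}$ (already established above); for $\supset$, using the local description of $K$ near $z$ as the intersection of the half-spaces defined by the facets through $z$, the set $K \cap \bigcap_j H_j$ contains a relatively open neighborhood of $z$ in $\bigcap_j H_j$, so the two affine subspaces have the same dimension and hence agree. Consequently $\mathrm{rank}\{v_{i_j}\} = n - k$, and since every $v_{i_j}$ satisfies $\langle z,v_{i_j}\rangle = 1$, the origin lies outside their affine hull, giving $\dim \mathrm{conv}\{v_{i_j}\} = \mathrm{rank}\{v_{i_j}\} - 1 = n - k - 1$. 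The main technical point is this local argument identifying $\mathrm{aff}(F)$ with $\bigcap_j H_j$; everything else reduces to routine bookkeeping with the polar duality already developed.
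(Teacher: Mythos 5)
Your proof is correct, but it follows a genuinely different route from the paper's. The paper does not prove the dimension claim at all---it cites Lemma 2.2.3 and equations (2.25), (2.28) of Schneider---and for the convex hull formula it imports Schneider's Theorem 2.4.9 ($N(K,z)=\mathrm{pos}\,\{v_{i}\}$) and then shows that any $x=\sum\lambda_{i}v_{i}$ in $N(K,z)\cap\partial K^{\circ}$ must have $\sum\lambda_{i}=1$ by normalizing to $y=x/\lambda$ and using $\gamma^{\circ}(y)=\gamma^{\circ}(x)/\lambda=1/\lambda$. You instead identify $N(K,z)\cap\partial K^{\circ}=\partial\gamma(z)$ with the exposed face $\{v\in K^{\circ}:\langle z,v\rangle=1\}$ via the duality already developed in the paper, then invoke only the facts listed in the paper's preliminaries (the $v_{i}$ are the vertices of $K^{\circ}$, and a face of a polytope is the convex hull of the vertices it contains) together with the equivalence $\langle z,v_{i}\rangle=1\iff z\in F_{i}\iff F\subset F_{i}$. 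This buys a self-contained proof of both assertions, including the dimension count, which the paper leaves to a citation; the paper's route is shorter but leans on two external results. One step of yours deserves tightening: to conclude $\mathrm{aff}(F)=\bigcap_{j}H_{j}$ you need the relatively open neighborhood of $z$ in $\bigcap_{j}H_{j}$ to lie in $F$ itself, not merely in $K\cap\bigcap_{j}H_{j}$. This is easily supplied either by the fact (stated in the paper) that $F$ is the intersection of the facets containing it, so $F=K\cap\bigcap_{j}H_{j}$, or directly from the face property: for $x$ in a symmetric such neighborhood, $y=2z-x$ also lies in it, and $z=(x+y)/2\in F$ with $x,y\in K$ forces $x,y\in F$. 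With that one line added, the argument is complete.
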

\begin{rem*}
Note that there is a unique face of $K$ with smallest dimension that
contains $z$, because we can easily see that a nonempty intersection
of a family of faces is also a face.
\end{rem*}
\begin{rem*}
In particular, when $z$ is not a vertex and thus $k>0$, $N(K,z)\cap\partial K^{\circ}$
is a face of $K^{\circ}$ with dimension at most $n-2$, and hence
it is contained in the set of singular points of $\partial K^{\circ}$.
\end{rem*}
\begin{proof}
For the proof of the first assertion see Lemma 2.2.3 and equations
(2.25) and (2.28) in \citealp{MR3155183}. For the second assertion,
first note that by Theorem 2.4.9 of \citealp{MR3155183} we have 
\[
N(K,z)=\mathrm{pos}\,\{v_{i}:v_{i}\textrm{ is normal to some facet }F_{i}\supset F\},
\]
where $\mathrm{pos}\,\{v_{i}\}$ is the set of all positive combinations
of $v_{i}$'s, i.e.\@ the set of all points 
\[
x=\lambda_{1}v_{1}+\dots+\lambda_{j}v_{j},\qquad\lambda_{i}\ge0.
\]
Now let us further assume that $v_{i}\in\partial K^{\circ}$ (i.e.\@
assume that $v_{i}$'s are the corresponding vertices of $K^{\circ}$).
Then $N(K,z)\cap\partial K^{\circ}$ is a convex set (being the subdifferential
$\partial\gamma(z)$) that contains $v_{i}$'s, so it contains their
convex hull. And, on the other hand, for every $x\in N(K,z)\cap\partial K^{\circ}$
there are $\lambda_{i}\ge0$ so that $x=\sum\lambda_{i}v_{i}$. Now
for $\lambda=\sum\lambda_{i}>0$ (note that $\lambda=0$ implies $0=x\in\partial K^{\circ}$,
which is a contradiction) we have 
\[
y:=\frac{1}{\lambda}x=\frac{\lambda_{1}}{\lambda}v_{1}+\dots+\frac{\lambda_{j}}{\lambda}v_{j}\in N(K,z)\cap\partial K^{\circ},
\]
since $y$ is a convex combination of $v_{i}$'s. But $x\in\partial K^{\circ}$
too, thus 
\[
1=\gamma^{\circ}(y)=\frac{1}{\lambda}\gamma^{\circ}(x)=\frac{1}{\lambda}\implies\lambda=1.
\]
Hence $x$ is a convex combination of $v_{i}$'s, as desired.%
\end{proof}

\section{Regularity of the distance function}

Now let $U\subset\R^{n}$ be a bounded open set. For two points $x,y$ we denote the closed, open, and half-open
line segments with endpoints $x,y$ by $[x,y],\,]x,y[,\,[x,y[,\,]x,y]$,  respectively. We define the distance
to $\partial U$ with respect to $\gamma$ as follows 
\begin{equation}
\rho(x)=d_{K}(x,\partial U):=\underset{y\in\partial U}{\min}\,\gamma(x-y).\label{eq:rho}
\end{equation}
It is well known (see {[}\citealp{MR667669}, Section 5.3{]}) that
$\rho$ is the unique viscosity solution of the Hamilton-Jacobi equation
\begin{equation}
\begin{cases}
\gamma^{\circ}(D\rho)=1 & \textrm{in }U,\\
\rho=0 & \textrm{on }\partial U.
\end{cases}\label{eq:H-J_eq}
\end{equation}
Moreover, from the definition of $\rho$ we easily obtain 
\begin{equation}
-\gamma(x-\tilde{x})\le\rho(\tilde{x})-\rho(x)\le\gamma(\tilde{x}-x),\label{eq:rho_Lip}
\end{equation}
for all $x,\tilde{x}\in\R^{n}$. Thus, in particular, $\rho$ is Lipschitz
continuous. 
\begin{defn}
When $\rho(x)=\gamma(x-y)$ for some $y\in\partial U$, we call $y$
a \textbf{$\boldsymbol{\rho}$-closest} point to $x$ on $\partial U$. 
\end{defn}
\begin{rem*}
Note that $y\in\partial U$ is the unique $\rho$-closest point on
$\partial U$ to itself.
\end{rem*}
\begin{lem}
\label{lem:K_ball_touch_bdry}Let $x\in U$,
and consider the convex set 
\[
K_{x}:=x-\rho(x)K:=\{x-\rho(x)z:z\in K\}.
\]
Then $\mathrm{int}(K_{x})\subset U$, and $\partial K_{x}\cap\partial U$
equals the set of $\rho$-closest points on $\partial U$ to $x$. 

Conversely, if the convex set $L:=x-rK$ satisfies $\mathrm{int}(L)\subset U$
and $\partial L\cap\partial U\ne\emptyset$, then we must have $L=K_{x}$.%
\end{lem}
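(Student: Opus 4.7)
The plan is to rewrite $K_x$ as a sublevel set of the function $y\mapsto\gamma(x-y)$ and then use the definition of $\rho$ together with the connectedness of the segment $[x,y]$ to chase the three assertions. Explicitly, since $y\in K_x$ iff $x-y\in\rho(x)K$ iff $\gamma(x-y)\le\rho(x)$, we have
\[
K_x=\{y:\gamma(x-y)\le\rho(x)\},\quad\mathrm{int}(K_x)=\{y:\gamma(x-y)<\rho(x)\},\quad\partial K_x=\{y:\gamma(x-y)=\rho(x)\},
\]
using that $\gamma(x-\cdot)$ is continuous and strictly positive away from $x$, and that $x\in\mathrm{int}(K_x)$ since $\rho(x)>0$ (because $\partial U$ is closed and does not meet $U$).

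For the inclusion $\mathrm{int}(K_x)\subset U$, I would argue by contradiction. Take $y\in\mathrm{int}(K_x)$, so $\gamma(x-y)<\rho(x)$, and suppose $y\notin U$. If $y\in\partial U$, then $\rho(x)\le\gamma(x-y)<\rho(x)$, which is absurd. Otherwise $y\in\R^n\setminus\bar U$; since $x\in U$ and $U$ is open while $\R^n\setminus\bar U$ is open, the segment $[x,y]$ (a connected set) must meet $\partial U$ at some point $z=(1-s)x+sy$ with $s\in(0,1)$. Then $x-z=s(x-y)$, so by positive homogeneity $\gamma(x-z)=s\gamma(x-y)<\gamma(x-y)<\rho(x)$, contradicting $\rho(x)\le\gamma(x-z)$ for $z\in\partial U$. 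The identification of $\partial K_x\cap\partial U$ with the set of $\rho$-closest points is then immediate from the reformulation: $y\in\partial K_x\cap\partial U$ iff $\gamma(x-y)=\rho(x)$ and $y\in\partial U$, which is exactly the definition of a $\rho$-closest point.

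For the converse statement, write $L=\{y:\gamma(x-y)\le r\}$ analogously. Pick any $y_0\in\partial L\cap\partial U$; then $\gamma(x-y_0)=r$, and by definition of $\rho$ we get $\rho(x)\le r$. Suppose for contradiction that $\rho(x)<r$, and choose a $\rho$-closest point $y^{*}\in\partial U$ (which exists because $\partial U$ is compact and $\gamma(x-\cdot)$ is continuous). Then $\gamma(x-y^{*})=\rho(x)<r$, so $y^{*}\in\mathrm{int}(L)\subset U$, which contradicts $y^{*}\in\partial U$. Hence $r=\rho(x)$ and $L=K_x$.

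The arguments are essentially bookkeeping once $K_x$ is recognised as a $\gamma$-sublevel set; the one place where care is needed is the $y\notin\bar U$ case of the first assertion, where the existence of a boundary crossing $z\in[x,y]\cap\partial U$ must be justified by connectedness rather than just assumed. Apart from that, no further tools beyond the definition of $\rho$, positive $1$-homogeneity of $\gamma$, and the compactness of $\partial U$ are required.
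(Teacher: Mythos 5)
Your proof is correct and follows essentially the same route as the paper: both identify $K_x$ as the sublevel set $\{y:\gamma(x-y)\le\rho(x)\}$ (the paper phrases this via $z=(y-x)/(-\rho(x))\in K$), derive the characterization of $\partial K_x\cap\partial U$ directly from the definition of a $\rho$-closest point, rule out $\mathrm{int}(K_x)$ meeting $\R^n\setminus U$ by producing a boundary point at which $\gamma(x-\cdot)<\rho(x)$, and settle the converse by comparing $r$ with $\rho(x)$. Your explicit segment-crossing justification for why a point of $\mathrm{int}(K_x)$ outside $\overline{U}$ forces an intersection with $\partial U$ is a welcome elaboration of a step the paper leaves implicit.
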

\begin{rem*}
It also follows that $K_{x}\subset\overline{U}$, since $K$, and
thus $K_{x}$ is the closure of its interior.
\end{rem*}
\begin{rem*}
As a consequence, when $y$ is a $\rho$-closest point on $\partial U$
to $x$ we have $[x,y[\,\subset U$. 
\end{rem*}
\begin{proof}
It is easy to see that 
\[
\partial K_{x}=\{x-\rho(x)z:z\in\partial K\},\qquad\mathrm{int}(K_{x})=\{x-\rho(x)z:z\in\mathrm{int}(K)\}.
\]
Now for any point $y\in\partial U$ we have
\[
\gamma(x-y)=\rho(x)\iff\gamma\Big(\frac{y-x}{-\rho(x)}\Big)=\gamma\Big(\frac{x-y}{\rho(x)}\Big)=1\iff z=\frac{y-x}{-\rho(x)}\in\partial K\iff y\in\partial K_{x}.
\]
Thus $\partial K_{x}\cap\partial U$ equals the set of $\rho$-closest
points on $\partial U$ to $x$. In addition, $\mathrm{int}(K_{x})$
does not intersect $\R^{n}-U$, since otherwise there would have been
some $\tilde{y}\in\partial U\cap\mathrm{int}(K_{x})$ for which we have 
\[
\tilde{z}=\frac{\tilde{y}-x}{-\rho(x)}\in\mathrm{int}(K)\implies\gamma\Big(\frac{\tilde{y}-x}{-\rho(x)}\Big)<1\implies\gamma(x-\tilde{y})<\rho(x),
\]
which is a contradiction. 

Next consider the set $L$. If $r>\rho(x)$ then we get $K_{x}\subset\mathrm{int}(L)$,
and thus $\mathrm{int}(L)$ intersects $\partial U$. And if $r<\rho(x)$
then we get $L\subset\mathrm{int}(K_{x})$; so $\partial L$ cannot
intersect $\partial U$. Therefore we must have $r=\rho(x)$, as desired. 
\end{proof}

\begin{lem}
\label{lem:cont_of_y}Suppose $x_{i}\in\overline{U}$
converges to $x\in\overline{U}$, and $y_{i}\in\partial U$ is a (not
necessarily unique) $\rho$-closest point to $x_{i}$.
\begin{enumerate}
\item[\upshape{(a)}] If $y_{i}$ converges to $\tilde{y}\in\partial U$, then $\tilde{y}$
is one of the $\rho$-closest points on $\partial U$ to $x$.
\item[\upshape{(b)}] If $y\in\partial U$ is the unique $\rho$-closest point to $x$,
then $y_{i}$ converges to $y$.
\end{enumerate}
\end{lem}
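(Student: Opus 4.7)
My plan is to prove (a) by a direct passage to the limit, and then obtain (b) as a compactness consequence of (a).

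For part (a), the key observation is that both $\gamma$ and $\rho$ are continuous. The gauge $\gamma$ is a finite convex function on $\R^n$, hence continuous; and $\rho$ is Lipschitz by (\ref{eq:rho_Lip}). By the definition of $y_i$ being $\rho$-closest to $x_i$, we have the identity
\[
\rho(x_i)=\gamma(x_i-y_i).
\]
Letting $i\to\infty$, the left side converges to $\rho(x)$ and the right side converges to $\gamma(x-\tilde{y})$. Therefore $\gamma(x-\tilde{y})=\rho(x)$, and since $\tilde{y}\in\partial U$ (as $\partial U$ is closed), this is exactly the statement that $\tilde{y}$ is a $\rho$-closest point to $x$.

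For part (b), I would argue by extracting subsequences. Since $U$ is bounded, $\overline{U}$ is compact, and $\partial U\subset\overline{U}$ is closed, hence $\partial U$ is compact. The sequence $\{y_i\}$ lies in this compact set, so any subsequence has a further subsequence $\{y_{i_k}\}$ converging to some $\tilde{y}\in\partial U$. Part (a) then forces $\tilde{y}$ to be a $\rho$-closest point on $\partial U$ to $x$. But by hypothesis $y$ is the \emph{unique} such point, so $\tilde{y}=y$. Since every subsequence of $\{y_i\}$ has a further subsequence converging to $y$, the whole sequence converges to $y$.

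There is no real obstacle here; the only subtlety is making sure the limit $\tilde y$ lies in $\partial U$ (which is immediate since $\partial U$ is closed) and being careful that in part (b) we use compactness of $\partial U$ rather than just boundedness, so that the subsequential limits are automatically in $\partial U$ where the characterization from (a) applies.
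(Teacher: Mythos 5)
Your proposal is correct and follows essentially the same route as the paper: part (a) by passing to the limit in $\rho(x_i)=\gamma(x_i-y_i)$ using continuity of $\gamma$ and $\rho$, and part (b) by compactness of $\partial U$ together with part (a) and uniqueness (the paper phrases this as a contradiction with a single convergent subsequence, while you use the ``every subsequence has a further subsequence'' formulation, but the content is identical).
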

\begin{proof}
This lemma is a simple consequence of the continuity of $\gamma,\rho$,
and compactness of $\partial U$. For (a) we have 
\[
\gamma(x-\tilde{y})=\lim\gamma(x_{i}-y_{i})=\lim\rho(x_{i})=\rho(x).
\]
Hence $\tilde{y}$ is a $\rho$-closest point to $x$.

Now to prove (b) suppose to the contrary that $y_{i}\not\to y$. Then
as $\partial U$ is compact, there is a subsequence $y_{i_{k}}$ that
converges to $z\in\partial U$ where $z\ne y$. Then by (a) $z$ must
be a $\rho$-closest point to $x$, which is in contradiction with
our assumption.
\end{proof}
\begin{lem}
\label{lem:segment_to_the_closest_pt}Suppose
$y$ is one of the $\rho$-closest points on $\partial U$ to $x\in U$.
Then $y$ is a $\rho$-closest point on $\partial U$ to every point
of $[x,y]$. Therefore $\rho$ varies linearly along the line segment
$[x,y]$.%
{} 

Furthermore, if $y$ is the unique $\rho$-closest point on $\partial U$
to $x$, then it is the unique $\rho$-closest point on $\partial U$
to every point of $[x,y]$. 
\end{lem}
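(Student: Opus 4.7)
The plan is to parametrize the segment $[x,y]$ as $z = (1-t)x + ty$ for $t \in [0,1]$ and use the basic one-homogeneity and subadditivity of $\gamma$, together with the Lipschitz bound (\ref{eq:rho_Lip}), to squeeze $\rho(z)$ from both sides.

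First I would compute, using positive $1$-homogeneity of $\gamma$, that $z - y = (1-t)(x-y)$ gives $\gamma(z-y) = (1-t)\gamma(x-y) = (1-t)\rho(x)$, and similarly $x - z = t(x-y)$ gives $\gamma(x-z) = t\rho(x)$. The upper bound $\rho(z) \le \gamma(z-y) = (1-t)\rho(x)$ is immediate from the definition of $\rho$, since $y \in \partial U$. For the matching lower bound I invoke (\ref{eq:rho_Lip}): $\rho(x) - \rho(z) \le \gamma(x - z) = t\rho(x)$, which rearranges to $\rho(z) \ge (1-t)\rho(x)$. Combining the two inequalities gives $\rho(z) = (1-t)\rho(x) = \gamma(z-y)$, so $y$ is a $\rho$-closest point on $\partial U$ to $z$, and the formula $\rho\bigl((1-t)x + ty\bigr) = (1-t)\rho(x)$ is precisely the linearity claim.

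For the uniqueness part, suppose $y$ is the unique $\rho$-closest point on $\partial U$ to $x$, and let $z \in [x,y]$. If $z = y$ then the only $\rho$-closest point to $z$ is $y$ itself (as in the remark preceding Lemma~\ref{lem:K_ball_touch_bdry}). Otherwise, assume $\tilde{y} \in \partial U$ satisfies $\gamma(z - \tilde{y}) = \rho(z) = (1-t)\rho(x)$. Then by subadditivity of $\gamma$ and the computation above,
\[
\gamma(x - \tilde{y}) \le \gamma(x - z) + \gamma(z - \tilde{y}) = t\rho(x) + (1-t)\rho(x) = \rho(x).
\]
Since $\rho(x) = \min_{w\in \partial U}\gamma(x-w)$, equality must hold, so $\tilde{y}$ is a $\rho$-closest point on $\partial U$ to $x$. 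By the uniqueness hypothesis, $\tilde{y} = y$.

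There is no real obstacle here; the argument is just the interplay between homogeneity, subadditivity, and the Lipschitz estimate. The only mild subtlety worth flagging is that no strict convexity of $K$ and no smoothness of $\partial U$ is needed, and the subadditivity step used in the uniqueness part could equally be phrased as saying that the $K$-ball $K_z = z - \rho(z)K$ is contained in the $K$-ball $K_x = x - \rho(x)K$, which meshes with Lemma~\ref{lem:K_ball_touch_bdry}.
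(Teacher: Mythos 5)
Your proof is correct and rests on the same core computation as the paper's: subadditivity plus positive $1$-homogeneity of $\gamma$ applied to the decomposition of $[x,y]$ at $z$, with the uniqueness part handled identically. The only cosmetic difference is that you obtain the lower bound on $\rho(z)$ directly from the Lipschitz estimate (\ref{eq:rho_Lip}) (itself a consequence of subadditivity), whereas the paper argues by contradiction with a strictly closer point $\tilde{y}$.
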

\begin{proof}
Let $z\in[x,y]$. Then we have $z-y=t(x-y)$ for some $t\in[0,1]$.
Suppose to the contrary that there is $\tilde{y}\in\partial U-\{y\}$
such that 
\[
\gamma(z-\tilde{y})<\gamma(z-y).
\]
Then we have 
\begin{align*}
\gamma(x-\tilde{y}) & \le\gamma(x-z)+\gamma(z-\tilde{y})<\gamma(x-z)+\gamma(z-y)\\
 & =\gamma\big((1-t)(x-y)\big)+\gamma\big(t(x-y)\big)=(1-t+t)\gamma(x-y)=\gamma(x-y),
\end{align*}
which is a contradiction. Hence $y$ is a $\rho$-closest point to
$z$.

Therefore the points in the segment $[x,y]$ have $y$ as a $\rho$-closest
point on $\partial U$. Hence for $0\le t\le\gamma(x-y)$ we have
\begin{align*}
\rho\Big(x-\frac{t}{\gamma(x-y)}(x-y)\Big) & =\gamma\Big(x-\frac{t}{\gamma(x-y)}(x-y)-y\Big)\\
 & =\Big(1-\frac{t}{\gamma(x-y)}\Big)\gamma(x-y)=\gamma(x-y)-t.
\end{align*}
Thus $\rho$ varies linearly along the segment.

For the last assertion, we can repeat the above argument starting
from $\gamma(z-\tilde{y})=\gamma(z-y)$ and arriving at $\gamma(x-\tilde{y})\le\gamma(x-y)$,
which is again a contradiction. 
\end{proof}

\begin{lem} 
Let $x\in U$, and suppose that $y$ is one of the $\rho$-closest points
to $x$ on $\partial U$. Suppose $\partial U$ is $C^{1}$ and $\nu$
is the inward normal to $\partial U$. Then we have 
\begin{equation}
\frac{x-y}{\gamma(x-y)}\in\partial\gamma^{\circ}(\nu(y)).\label{eq:K-normal}
\end{equation}
In particular, if $\gamma^{\circ}$ is differentiable at $\nu(y)$
we have 
\begin{equation}
x=y+\rho(x)\,D\gamma^{\circ}(\nu(y)).\label{eq:Parametrize_by_rho}
\end{equation}
\end{lem}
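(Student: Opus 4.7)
The plan is to read off the geometric condition from Lemma \ref{lem:K_ball_touch_bdry} and then translate it through the subdifferential machinery developed earlier. Set $z := (x-y)/\rho(x)$, so that $y = x - \rho(x) z$ and $z \in \partial K$ (since $y \in \partial K_x$). Because $K_x := x - \rho(x) K$ satisfies $K_x \subset \overline{U}$ with $y \in \partial K_x \cap \partial U$, and $\partial U$ admits a unique tangent hyperplane $T$ at $y$ (being $C^{1}$), the hyperplane $T$ must be a supporting hyperplane of $K_x$ at $y$: locally near $y$, $K_x$ lies on the side of $T$ to which $\nu(y)$ points, and convexity of $K_x$ upgrades this to a global containment. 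Hence $-\nu(y)$ is an outer normal vector of $K_x$ at $y$.

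Next I transfer this to $K$. From $K_x = x - \rho(x) K$, a direct computation shows that $w$ is an outer normal of $K_x$ at $y$ if and only if $-w$ is an outer normal of $K$ at $z$; taking $w = -\nu(y)$ yields $\nu(y) \in N(K,z)$. By conic homogeneity, $\tilde{v} := \nu(y)/\gamma^{\circ}(\nu(y))$ also lies in $N(K,z)$, and $\tilde{v} \in \partial K^{\circ}$ by $1$-homogeneity of $\gamma^{\circ}$. So by (\ref{eq:dg}) we get $\tilde{v} \in \partial\gamma(z)$, since $z/\gamma(z) = z$.

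Now apply the extended form of Lemma \ref{lem:x_in_dg(v)} with $x \leftrightarrow z$ and $v \leftrightarrow \nu(y)$ (both nonzero): the relation $\nu(y)/\gamma^{\circ}(\nu(y)) \in \partial\gamma(z)$ is equivalent to $z/\gamma(z) \in \partial\gamma^{\circ}(\nu(y))$. Using $\gamma(z) = 1$, this reads $(x-y)/\gamma(x-y) = z \in \partial\gamma^{\circ}(\nu(y))$, which is (\ref{eq:K-normal}). The final assertion (\ref{eq:Parametrize_by_rho}) is then immediate: differentiability of $\gamma^{\circ}$ at $\nu(y)$ forces $\partial\gamma^{\circ}(\nu(y)) = \{D\gamma^{\circ}(\nu(y))\}$, so $(x-y)/\rho(x) = D\gamma^{\circ}(\nu(y))$, i.e., $x = y + \rho(x)\, D\gamma^{\circ}(\nu(y))$.

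The delicate point is really the first paragraph: one has to promote the local inclusion $\mathrm{int}(K_x) \subset U$ near $y$ into a genuine supporting-hyperplane statement for $K_x$ at $y$. The local side condition is clear from the $C^{1}$ structure of $\partial U$, and the extension to all of $K_x$ uses only convexity of $K_x$ together with the elementary fact that a convex body cannot cross a hyperplane that locally supports it at a boundary point. Once this geometric input is in hand, everything else is bookkeeping with $N(K,\cdot)$, (\ref{eq:dg}), and Lemma \ref{lem:x_in_dg(v)}.
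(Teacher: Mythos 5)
Your argument is correct and follows essentially the same route as the paper: establish that $K_{x}$ lies on the inner side of the tangent hyperplane at $y$ (so $-\nu(y)\in N(K_{x},y)$, hence $\nu(y)\in N(K,\frac{x-y}{\rho(x)})$), then convert this to (\ref{eq:K-normal}) via (\ref{eq:dg}) and Lemma \ref{lem:x_in_dg(v)}. The one step you compress --- upgrading ``$\mathrm{int}(K_{x})\subset U$ near the $C^{1}$ point $y$'' to a genuine supporting-hyperplane statement --- is exactly the segment-based contradiction the paper writes out in full, so nothing is missing.
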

\begin{rem*}
Consequently, (by (\ref{eq:dg}) and (\ref{eq:v_in_N(x)}), or directly
from the following proof) we also have 
\begin{equation*}
\nu(y)\in N\Big(K,\frac{x-y}{\rho(x)}\Big).
\end{equation*}
In addition, we will see that 
\begin{equation}
-\nu(y)\in N(K_x,y).\label{eq:-nu_in_Kx}
\end{equation}
\end{rem*}
\begin{rem*}
If instead of $\partial U$ being $C^{1}$, we merely assume that
locally $U$ is on one side (the side to which $\nu$ is pointing)
of a hypersurface tangent at $y$ to the hyperplane $H_{y,\nu}$,
then the following proof works, and the conclusion of the lemma still
holds.
\begin{figure}
\begin{tikzpicture}[line cap=round,line join=round,>=triangle 45,x=1.0cm,y=1.0cm] 

\clip(-5,-1) rectangle (8.,4.5); 

\draw [line width=0.75pt] (0.74,0.509615431964962) -- (3.14,1.3) -- (3.34,4) -- (1.34,3.5) -- (0.34,1.4) -- cycle;

\draw [rotate around={-21:(0.74,0.509615431964962)}, line width=0.3pt,dash pattern=on 2pt off 2pt] (0.74,0.509615431964962)-- (1.6350559584448812,3.3863653478797637); 

\draw [rotate around={-21:(0.74,0.509615431964962)}, ->,line width=0.3pt, >=stealth] (0.74,0.509615431964962) -- (1.1591475422042112,1.8567743949496367); 

\draw [rotate around={-28:(0.74,0.509615431964962)}, ->,line width=0.3pt, >=stealth] (0.74,0.509615431964962) -- (0.40621183416745316,1.4974466417936942); 

\draw [rotate around={-18:(0.74,0.509615431964962)}, line width=0.5pt]  (-3.5479974267239784,-0.9673560492307696).. controls (-2,1) and (4,0) ..  (6.8174727583718076,2.6029594918089893);

\begin{scriptsize} 

\draw [fill=black] (0.74,0.509615431964962) circle (0.5pt); 

\draw[color=black] (0.82,0.13) node {$y$};

\draw[rotate around={-21:(0.74,0.509615431964962)}, color=black] (1.84,3.63) node {$x$}; 

\draw [rotate around={-21:(0.74,0.509615431964962)}, fill=black] (1.6350559584448812,3.3863653478797637) circle (0.5pt);

\draw[color=black] (2.2,1.4) node {$D\gamma ^\circ (\nu)$}; 

\draw[rotate around={-30:(0.74,0.509615431964962)}, color=black] (0.34,1.03) node {$\nu$};

\draw[rotate around={-30:(0.74,0.509615431964962)}, color=black] (-1.44,-1) node {$\partial U$}; 

\draw[color=black] (4.5,3) node {$x - \rho (x) K$}; 

\end{scriptsize} 
\end{tikzpicture} 

\caption{\protect\label{fig:1}$y$ is the $\rho$-closest point to
$x$ on $\partial U$. Note that $K$ went through a point reflection, then scaled and translated to give $K_x =x-\rho(x)K$.}
\end{figure}
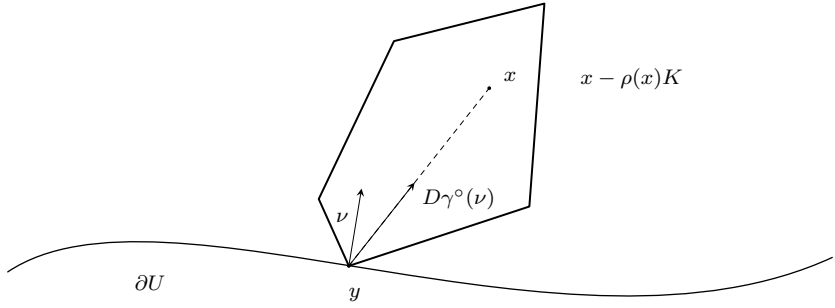
\end{rem*}
\begin{proof}
Consider the convex set $K_{x}=x-\rho(x)K$. We know that $\mathrm{int}(K_{x})\subset U$
and $y\in\partial K_{x}\cap\partial U$. The hyperplane $H_{y,\nu}$
is tangent to $\partial U$ at $y$, so we must have 
\[
K_{x}\subset\{z:\langle z-y,\nu\rangle\ge0\}.
\]
Because otherwise we would have $\langle z-y,\nu\rangle<0$ for some
$z\in K_{x}$. Then we get 
\[
\langle z_{t}-y,\nu\rangle<0
\]
for $z_{t}:=tz+(1-t)y\in\,]y,z]$. However this implies that the line
segment $]y,z]$ intersects the exterior of $U$, since $H_{y,\nu}$
is tangent to $\partial U$ at $y$ and $\nu$ points inward. But
$]y,z]\subset K_{x}$ as $K_{x}$ is convex; so in this case $K_{x}$
will intersect the exterior of $U$, which is a contradiction. 

Therefore $K_{x}\subset\{z:\langle z-y,\nu\rangle\ge0\}$, and thus $-\nu\in N(K_x,y)$. Hence for every $w\in K$ we have 
\begin{align*}
x-\rho(x)w\in\{z:\langle z-y,\nu\rangle\ge0\} & \implies\langle x-\rho(x)w-y,\nu\rangle\ge0\\
 & \implies\langle w-\frac{x-y}{\rho(x)},\nu\rangle\le0\implies\nu\in N\Big(K,\frac{x-y}{\rho(x)}\Big).
\end{align*}
(Note that $\frac{x-y}{\rho(x)}=\frac{x-y}{\gamma(x-y)}\in\partial K$.)
Hence by (\ref{eq:dg}) we have 
\[
\frac{\nu}{\gamma^{\circ}(\nu)}\in\partial\gamma\Big(\frac{x-y}{\gamma(x-y)}\Big),
\]
and thus by Lemma \ref{lem:x_in_dg(v)} we get
$\frac{x-y}{\gamma(x-y)}\in\partial\gamma^{\circ}(\nu)$, as desired. 
\end{proof}
For two vectors $a,b$, the $a\otimes b$ denotes the rank-one matrix
whose action on a vector $z$ is $\langle z,b\rangle a$. It is easy
to see that the transpose of this matrix is $(a\otimes b)^{T}=b\otimes a$.
Now let us define 
\begin{equation}
X=X(v):=\frac{1}{\langle D\gamma^{\circ}(v),v\rangle}D\gamma^{\circ}(v)\otimes v=\frac{1}{\gamma^{\circ}(v)}D\gamma^{\circ}(v)\otimes v,\label{eq:X}
\end{equation}
provided that $\gamma^{\circ}$ is differentiable at $v$. Using (\ref{eq:Euler_formula})
we can easily show that $X^{2}=X$, and 
\[
(I-X)D\gamma^{\circ}(v)=0,
\]
where $I$ is the identity matrix. 

When $K$ is a polytope, for almost every point $y$ on the boundary of a generic smooth domain, we expect the inward unit normal $\nu(y)$ to belong to the interior of a normal cone at some vertex of $K$. The next theorems show that $\rho$ is smooth around these points $y$ and around any point $x$ that has such a $y$ as its unique $\rho$-closest point on the boundary. It should be noted that the set of $x$'s for which this condition fails can have a positive measure, as we will see in Example \ref{exa:d_infty}. However, the following theorems do not require $K$ to be a polytope and are stated in more general terms. 
%

\begin{thm}
\label{thm:rho_is_C^2}Suppose $\partial U$ is
$C^{k,\alpha}$, where $k\ge1$ and $0\le\alpha\le1$, and let $\nu$
be the inward unit normal to $\partial U$. Let $x_{0}\in U$ and
suppose $y_{0}\in\partial U$ is the unique $\rho$-closest point
to $x_{0}$. Furthermore, suppose $\nu_{0}=\nu(y_{0})$ is an interior
point of the normal cone $N(K,\frac{x_{0}-y_{0}}{\rho(x_{0})})$.
Then $\rho$ is $C^{k,\alpha}$ on a neighborhood of $x_{0}$, and
every point $x$ near $x_{0}$ has a unique $\rho$-closest point
$y\in\partial U$ that is near $y_{0}$. In addition, we have 
\begin{equation}
D\rho(x)=\mu(y):=\frac{\nu(y)}{\gamma^{\circ}(\nu(y))}.\label{eq:D_rho(x)}
\end{equation}
Also, $y$ is a $C^{k,\alpha}$ function of $x$, $\gamma^{\circ}$
is differentiable at $\nu=\nu(y)$, and we have 
\begin{equation}
Dy(x)=I-X(\nu)=I-\frac{1}{\gamma^{\circ}(\nu)}D\gamma^{\circ}(\nu)\otimes\nu.\label{eq:Dy(x)}
\end{equation}
Furthermore, when $k\ge2$ we have 
\begin{equation}
D^{2}\rho(x)=\frac{1}{\gamma^{\circ}(\nu)}(I-X^{T})D^{2}d(y)(I-X),\label{eq:D2_rho(x)}
\end{equation}
where $d(\cdot):=\min_{z\in\partial U}|\cdot-\,z|$ is the Euclidean
distance to $\partial U$.
\end{thm}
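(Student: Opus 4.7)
The plan is to exploit the hypothesis $\nu_0 \in \mathrm{int}(N(K, z_0))$, where $z_0 := (x_0 - y_0)/\rho(x_0) \in \partial K$, to reduce the closest-point problem to a simple Inverse Function Theorem application. By Lemma \ref{lem:g0_diff_on_N} applied at $z_0$, $\gamma^\circ$ is differentiable on the open cone $\mathrm{int}(N(K, z_0))$ with constant derivative $z_0$; combined with $1$-homogeneity this forces $\gamma^\circ(v) = \langle z_0, v\rangle$ throughout this cone interior. Since $\nu$ is $C^{k-1,\alpha}$ on $\partial U$ and $\nu(y_0) = \nu_0$ lies in this open set, I pick a $\partial U$-neighborhood $V$ of $y_0$ on which $\nu(y) \in \mathrm{int}(N(K, z_0))$, so that $D\gamma^\circ(\nu(y)) \equiv z_0$ in $V$. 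Geometrically, this says the $\gamma$-geometry near $y_0$ collapses to an affine problem in the fixed direction $z_0$.

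I then apply the Inverse Function Theorem to $F(y, r) := y + rz_0$ on $V \times (0, \infty)$. The differential of $F$ at $(y_0, \rho(x_0))$ sends $(\xi, \tau) \in T_{y_0}\partial U \times \mathbb{R}$ to $\xi + \tau z_0$; applying (\ref{eq:v.x_is_g(x)}) to $\nu_0/\gamma^\circ(\nu_0) \in \partial\gamma(z_0)$ (which lies in this set by (\ref{eq:dg})) gives $\langle z_0, \nu_0 \rangle = \gamma^\circ(\nu_0) > 0$, so $z_0$ is transverse to $T_{y_0}\partial U$ and the differential is bijective. The IFT thus produces $C^{k,\alpha}$ functions $y(x), r(x)$ with $x = y(x) + r(x)z_0$ near $x_0$. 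To identify $r(x) = \rho(x)$ and to see that $y(x)$ is the unique $\rho$-closest point, one direction is immediate: $\rho(x) \leq \gamma(x - y(x)) = r(x)\gamma(z_0) = r(x)$ since $z_0 \in \partial K$. For the converse and uniqueness, any $\rho$-closest point $\tilde y$ to $x$ lies in $V$ for $x$ near $x_0$ by Lemma \ref{lem:cont_of_y}(b); then (\ref{eq:K-normal}) combined with the affine structure from the first paragraph forces $(x - \tilde y)/\rho(x) \in \partial\gamma^\circ(\nu(\tilde y)) = \{z_0\}$, so $\tilde y = x - \rho(x)z_0$, and local injectivity of $F$ gives $\tilde y = y(x)$ and $\rho(x) = r(x)$.

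The derivative formulas then follow by direct differentiation. From $x = y(x) + \rho(x)z_0$ one reads off $Dy = I - z_0 \otimes D\rho$; differentiating a $C^{k,\alpha}$ defining equation $\phi(y(x)) \equiv 0$ for $\partial U$ (with $\nabla\phi \parallel \nu$) and solving yields $D\rho = \nu/\langle z_0, \nu\rangle = \nu/\gamma^\circ(\nu) = \mu(y)$, which is (\ref{eq:D_rho(x)}), and substituting back gives (\ref{eq:Dy(x)}). For $k \geq 2$, applying the quotient rule to $D\rho = \nu(y(x))/\gamma^\circ(\nu(y(x)))$ and again using $\gamma^\circ(\nu) = \langle z_0, \nu\rangle$ gives $D\mu = (1/\gamma^\circ(\nu))(I - X^T)D\nu$, whence $D^2\rho = D\mu(y)\,Dy(x) = (1/\gamma^\circ(\nu))(I - X^T)D\nu(y)(I - X)$; formula (\ref{eq:D2_rho(x)}) then follows from the standard identity $D\nu(y) = D^2 d(y)$ on $\partial U$, valid when $\nu$ is extended along normal lines in a tubular neighborhood to coincide with the gradient of the signed Euclidean distance. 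The main subtlety is the uniqueness argument in the second paragraph, where the global hypothesis on $\rho$-closest points (via Lemma \ref{lem:cont_of_y}) has to interact cleanly with the local analytic structure coming from the IFT; once that is in place, the derivatives are a direct consequence of the affine simplification $\gamma^\circ(v) = \langle z_0, v\rangle$ on the cone interior.
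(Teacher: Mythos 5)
Your proposal is correct and follows essentially the same route as the paper: both hinge on Lemma \ref{lem:g0_diff_on_N} to make $D\gamma^{\circ}$ constant (equal to $z_{0}=D\gamma^{\circ}(\nu_{0})$) near $\nu_{0}$, apply the inverse function theorem to the map $(y,t)\mapsto y+tz_{0}$ (the paper writes it in a boundary chart as $G(\mathrm{z},t)=Y(\mathrm{z})+\frac{t}{\rho_{0}}(x_{0}-y_{0})$), use Lemma \ref{lem:cont_of_y} plus (\ref{eq:K-normal}) to identify the $t$-component with $\rho$ and obtain uniqueness of the closest point, and then differentiate $\mu(y(x))$ for (\ref{eq:D2_rho(x)}). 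The only cosmetic difference is that you obtain (\ref{eq:D_rho(x)}) and (\ref{eq:Dy(x)}) by implicit differentiation of $x=y(x)+\rho(x)z_{0}$ and of a defining equation for $\partial U$, whereas the paper reads them off from the rows of the inverse Jacobian $A^{-1}$.
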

\begin{rem*}
Note that, in particular, $D\rho(x)\ne0$. 
\end{rem*}
\begin{rem*}
It is well known that we have $\nu=Dd$. So we can regard $Dd$ as
a $C^{k-1,\alpha}$ extension of $\nu$ to a neighborhood of $\partial U$.
Let us also recall that the eigenvalues of $D^{2}d(y)=D\nu(y)$ are
minus the principal curvatures of $\partial U$ at $y$, together
with one eigenvalue 0. For the details see {[}\citealp{MR1814364},
Section 14.6{]}.
\end{rem*}
\begin{thm}
\label{thm:rho_is_C2_at_y}Suppose
$\partial U$ is $C^{k,\alpha}$, where $k\ge1$ and $0\le\alpha\le1$,
and let $\nu$ be the inward unit normal to $\partial U$. Let $y_{0}\in\partial U$
and suppose there is $x_{0}\in U$ such that $y_{0}$ is the unique%
{} $\rho$-closest point on $\partial U$ to $x_{0}$. Furthermore,
suppose $\nu_{0}=\nu(y_{0})$ is an interior point of the normal cone
$N(K,\frac{x_{0}-y_{0}}{\rho(x_{0})})$. Then there is an open ball
$B_{r}(y_{0})$ such that $\rho$ is $C^{k,\alpha}$ on $\overline{U}\cap B_{r}(y_{0})$.
In addition, every $y\in \partial U \cap B_{r}(y_{0})$ is the unique
$\rho$-closest point on $\partial U$ to some points in $U$. Moreover,
we have 
\begin{equation}
D\rho(y)=\mu(y)=\frac{\nu(y)}{\gamma^{\circ}(\nu(y))}.\label{eq:D_rho(y)}
\end{equation}
Furthermore, when $k\ge2$ we have 
\begin{equation}
D^{2}\rho(y)=\frac{1}{\gamma^{\circ}(\nu)}(I-X^{T})D^{2}d(y)(I-X),\label{eq:D2_rho(y)}
\end{equation}
where $d$ is the Euclidean distance to $\partial U$, and $X=X(\nu)$
is given by (\ref{eq:X}).
\end{thm}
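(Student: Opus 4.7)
The plan is to construct a $C^{k,\alpha}$ parametrization of a one-sided neighborhood of $y_{0}$ in $\overline{U}$ using the single fixed direction $z_{0}:=(x_{0}-y_{0})/\rho(x_{0})$, and then to propagate the uniqueness of the $\rho$-closest point from Theorem \ref{thm:rho_is_C^2} down to $\partial U$ via Lemma \ref{lem:segment_to_the_closest_pt}. Since $z_{0}\in\partial K$ and $\nu_{0}\in\mathrm{int}\,N(K,z_{0})$, Lemma \ref{lem:g0_diff_on_N} ensures that $\gamma^{\circ}$ is differentiable on $\mathrm{int}\,N(K,z_{0})$ with $D\gamma^{\circ}\equiv z_{0}$ there, and continuity of $\nu$ on $\partial U$ yields a neighborhood $V$ of $y_{0}$ in $\partial U$ with $\nu(V)\subset\mathrm{int}\,N(K,z_{0})$, so $D\gamma^{\circ}(\nu(y))=z_{0}$ for every $y\in V$. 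Consider
\[
\Phi(y,t):=y+tz_{0},\qquad(y,t)\in V\times[0,\infty).
\]
Its differential at $(y_{0},0)$ maps the first factor identically onto $T_{y_{0}}\partial U$ and the second onto $\R z_{0}$, and Euler's identity (\ref{eq:Euler_formula}) applied to $\gamma^{\circ}$ gives $\langle z_{0},\nu_{0}\rangle=\gamma^{\circ}(\nu_{0})>0$, so $z_{0}$ is transverse to $T_{y_{0}}\partial U$ and the differential is invertible. By the inverse function theorem for manifolds with boundary, after shrinking $V$ and choosing $T\in(0,\rho(x_{0})]$ sufficiently small, $\Phi:V\times[0,T]\to W$ is a $C^{k,\alpha}$ diffeomorphism onto a relatively open neighborhood $W$ of $y_{0}$ in $\overline{U}$, with $W\cap\partial U=V$ and $\Phi(V\times(0,T])\subset U$.

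The crucial step is to show, after possibly further shrinking $V$, that for every $(y,t)\in V\times(0,T]$ the point $\Phi(y,t)$ has $y$ as its unique $\rho$-closest point on $\partial U$ and $\rho(\Phi(y,t))=t$. Set $x_{*}:=y_{0}+Tz_{0}\in\,]y_{0},x_{0}]$. By Lemma \ref{lem:segment_to_the_closest_pt} applied along $[x_{0},y_{0}]$, $y_{0}$ is the unique $\rho$-closest point to $x_{*}$, and $(x_{*}-y_{0})/\rho(x_{*})=z_{0}$, so Theorem \ref{thm:rho_is_C^2} applies at $x_{*}$. It produces an open ball $B\ni x_{*}$ in $U$ on which $\rho$ is $C^{k,\alpha}$, every $x\in B$ has a unique $\rho$-closest point $y(x)$ near $y_{0}$, and (using $D\gamma^{\circ}(\nu(y(x)))=z_{0}$ by continuity) the identity $x=y(x)+\rho(x)z_{0}$ from (\ref{eq:Parametrize_by_rho}) holds throughout $B$. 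Now shrink $V$ so that $y+Tz_{0}\in B$ for every $y\in V$, and set $\tilde{y}:=y(y+Tz_{0})$; comparing the two expressions for $y+Tz_{0}$ yields
\[
y-\tilde{y}=\alpha z_{0},\qquad\alpha:=\rho(y+Tz_{0})-T.
\]
Writing $\partial U$ near $y_{0}$ as a $C^{1}$ graph over $T_{y_{0}}\partial U$ gives $|\langle y-\tilde{y},\nu_{0}\rangle|\le\varepsilon|y-\tilde{y}|$ with $\varepsilon\to 0$ as $V$ shrinks, while $y-\tilde{y}=\alpha z_{0}$ forces $|\langle y-\tilde{y},\nu_{0}\rangle|=|\alpha|\gamma^{\circ}(\nu_{0})$; for $V$ small enough this compels $\alpha=0$, so $\tilde{y}=y$ and $\rho(y+Tz_{0})=T$. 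Lemma \ref{lem:segment_to_the_closest_pt} applied to $[y+Tz_{0},y]$ then extends uniqueness and linearity to every $t\in(0,T]$, giving $\rho(\Phi(y,t))=t$.

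Consequently $\rho\circ\Phi(y,t)=t$ is smooth in $(y,t)$, and composing with $\Phi^{-1}$ gives $\rho\in C^{k,\alpha}(W)$; choosing $r>0$ with $\overline{U}\cap B_{r}(y_{0})\subset W$ delivers the stated regularity, while the claim that every $y\in\partial U\cap B_{r}(y_{0})$ is the unique $\rho$-closest point to some interior point is then immediate on taking $y+tz_{0}$ for small $t>0$. The formulas (\ref{eq:D_rho(y)}) and (\ref{eq:D2_rho(y)}) follow from their interior analogues (\ref{eq:D_rho(x)}) and (\ref{eq:D2_rho(x)}) in Theorem \ref{thm:rho_is_C^2} by continuity: along any sequence $x=y+tz_{0}\in U$ with $t\downarrow 0$ one has $y(x)=y$ and $\nu(y(x))=\nu(y)$, and the $C^{k,\alpha}$ regularity of $\rho$ up to $\partial U$ lets the formulas pass to the limit. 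The main obstacle is the uniqueness-propagation step in the middle paragraph --- in particular ruling out any remote boundary point from being $\rho$-closer to $\Phi(y,t)$ than $y$ itself --- since this is the only place where the global minimization defining $\rho$ genuinely interacts with the purely local parametrization $\Phi$.
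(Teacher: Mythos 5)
Your proof is correct, and while it is built on the same diffeomorphism as the paper's --- the map $(y,t)\mapsto y+t\,D\gamma^{\circ}(\nu_{0})$ together with the same transversality computation $\langle D\gamma^{\circ}(\nu_{0}),\nu_{0}\rangle=\gamma^{\circ}(\nu_{0})>0$ --- the mechanism by which you identify $\rho$ with the $t$-coordinate is genuinely different. The paper works two-sidedly: it introduces the signed distance $\rho_{s}$ (equal to $-d_{-K}(\cdot,\partial U)$ outside $\overline{U}$), derives the exterior analogue of \eqref{eq:Parametrize_by_rho}, and shows directly that every $x$ in a full open ball around $y_{0}$ satisfies $x=\hat{y}+\rho_{s}(x)\,D\gamma^{\circ}(\nu_{0})$, so that injectivity of the parametrizing map forces $\rho_{s}=t$. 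You instead stay entirely inside $\overline{U}$ and bootstrap from the already-proved interior Theorem \ref{thm:rho_is_C^2}: you apply it at the slice $x_{*}=y_{0}+Tz_{0}$, pin down the closest point of $y+Tz_{0}$ by the clean observation that $y-\tilde{y}$ is simultaneously parallel to $z_{0}$ (which makes a definite angle with $T_{y_{0}}\partial U$) and a chord of a $C^{1}$ graph with small slope, and then propagate uniqueness and linearity down to $t=0$ with Lemma \ref{lem:segment_to_the_closest_pt}. Both arguments are sound; the quantifier ordering in your middle step (choose $T$, obtain the ball $B$ at $x_{*}$, then shrink $V$) is consistent, and the passage to the limit for \eqref{eq:D_rho(y)} and \eqref{eq:D2_rho(y)} coincides with the paper's. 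What your route buys is economy --- no exterior distance $\bar{\rho}$ and no re-derivation of the boundary parametrization identity, just a reuse of Theorem \ref{thm:rho_is_C^2} as a black box. What the paper's route buys is the extra geometric information that the natural $C^{k,\alpha}$ extension of $\rho$ across $\partial U$ is precisely the signed distance $-d_{-K}$, a fact the paper exploits later when differentiating signed distance functions in the examples; your extension of $\rho$ (the $t$-coordinate of the extended inverse) exists but is not identified with anything intrinsic.
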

\begin{rem*}
Consequently, when $x$ has $y$ as its unique $\rho$-closest point
on $\partial U$ we have 
\[
D^{2}\rho(x)=D^{2}\rho(y).
\]
Hence $D^{2}\rho$ is constant along the segment $[x,y]$, since by
Lemma \ref{lem:segment_to_the_closest_pt}
the points on the segment have $y$ as their unique $\rho$-closest
point. This is a special case of the monotonicity formula for the
second derivative of distance functions explored in \citep{SAFDARI202176}.
\end{rem*}
\begin{proof}[\textbf{Proof of Theorem \ref{thm:rho_is_C^2}}]
 Let $\mathrm{z}\mapsto Y(\mathrm{z})$ be a $C^{k,\alpha}$ parametrization
of $\partial U$ around $Y(0)=y_{0}$, where $\mathrm{z}$ varies
in an open set $V\subset\R^{n-1}$. Consider the map $G:V\times\R\to\R^{n}$
defined by 
\[
G(\mathrm{z},t):=Y(\mathrm{z})+\frac{t}{\rho(x_{0})}(x_{0}-y_{0}).
\]
Note that $G$ is a $C^{k,\alpha}$ function. Also note that we have
$G(0,\rho_{0})=x_{0}$, where $\rho_{0}=\rho(x_{0})$. Now we have
\[
\begin{cases}
D_{\mathrm{z}_{j}}G=D_{\mathrm{z}_{j}}Y,\\
D_{t}G=\frac{1}{\rho(x_{0})}(x_{0}-y_{0})=D\gamma^{\circ}(\nu_{0}).
\end{cases}
\]
Note that $\gamma^{\circ}$ is differentiable at $\nu_{0}$ by Lemma
\ref{lem:g0_diff_on_N}, and thus by (\ref{eq:Parametrize_by_rho})
we have $\frac{x_{0}-y_{0}}{\rho_{0}}=D\gamma^{\circ}(\nu_{0})$.
Let $w_{j}:=D_{\mathrm{z}_{j}}Y$. Note that at $\mathrm{z}=0$ the
vectors $w_{1},\dots,w_{n-1}$ form a basis for the tangent space
to $\partial U$ at $y_{0}$. Let $w$ be the orthogonal projection
of $D\gamma^{\circ}(\nu_{0})$ on this tangent space. Then we have
(we represent a matrix by its columns) 
\begin{align*}
 & \hspace{-1cm}\det DG(0,\rho_{0})\\
 & =\det\begin{bmatrix}w_{1} & \cdots & w_{n-1} & D\gamma^{\circ}(\nu_{0})\end{bmatrix}=\det\begin{bmatrix}w_{1} & \cdots & w_{n-1} & w+\langle D\gamma^{\circ}(\nu_{0}),\nu_{0}\rangle\nu_{0}\end{bmatrix}\\
 & =\langle D\gamma^{\circ}(\nu_{0}),\nu_{0}\rangle\det\begin{bmatrix}w_{1} & \cdots & w_{n-1} & \nu_{0}\end{bmatrix}=\gamma^{\circ}(\nu_{0})\det\begin{bmatrix}w_{1} & \cdots & w_{n-1} & \nu_{0}\end{bmatrix}\ne0.
\end{align*}
Note that in the last line we have used (\ref{eq:Euler_formula}),
and the fact that $w_{1},\dots,w_{n-1},\nu_{0}$ are linearly independent. 

Therefore, by the inverse function theorem, $G$ is invertible on an
open set of the form $W\times(\rho_{0}-h,\rho_{0}+h)$, and it has
a $C^{k,\alpha}$ inverse on $B_{r}(x_{0})\subset U$. We can assume
that $W$ is small enough so that for $y\in Y(W)$ we have $\nu(y)\in\mathrm{int}\big(N(K,\frac{x_{0}-y_{0}}{\rho_{0}})\big)$,
since $\nu$ is continuous. Suppose $r$ is small enough so that for
every $x\in B_{r}(x_{0})$ the $\rho$-closest points on $\partial U$
to $x$ belong to $Y(W)$. This is possible due to Lemma \ref{lem:cont_of_y}
and the fact that $y_{0}$ is the unique $\rho$-closest point to
$x_{0}$ by assumption. Also suppose that $r$ is small enough so
that for every $x\in B_{r}(x_{0})$ we have $\rho(x)\in(\rho_{0}-h,\rho_{0}+h)$,
which is possible due to the continuity of $\rho$. Now we know that
$G:(\mathrm{z},t)\mapsto x$ has an inverse, denoted by $\mathrm{z}(x),t(x)$,
where $\mathrm{z}(\cdot),t(\cdot)$ are $C^{k,\alpha}$ functions
of $x$. Let $y:=Y(\mathrm{z}(x))$. Then we have 
\[
x=G(\mathrm{z}(x),t(x))=y+t(x)\,D\gamma^{\circ}(\nu_{0}).
\]
On the other hand, (\ref{eq:Parametrize_by_rho}) implies that 
\[
x=\hat{y}+\rho(x)\,D\gamma^{\circ}(\nu(\hat{y})),
\]
where $\hat{y}$ is one of the $\rho$-closest points on $\partial U$
to $x$, which by our assumption about $B_{r}(x_{0})$ must belong
to $Y(W)$. Also note that by our assumption about $W$ we have $\nu_{0},\nu(\hat{y})\in\mathrm{int}\big(N(K,\frac{x_{0}-y_{0}}{\rho_{0}})\big)$;
so Lemma \ref{lem:g0_diff_on_N} implies
that $\gamma^{\circ}$ is differentiable at $\nu(\hat{y})$ and we
have 
\[
D\gamma^{\circ}(\nu(\hat{y}))=D\gamma^{\circ}(\nu_{0}).
\]
Thus we get 
\[
x=\hat{y}+\rho(x)\,D\gamma^{\circ}(\nu_{0}).
\]
But by our assumption about $B_{r}(x_{0})$, there is $\hat{\mathrm{z}}\in W$
such that $\hat{y}=Y(\hat{\mathrm{z}})$. Hence $(\hat{\mathrm{z}},\rho(x))\in W\times(\rho_{0}-h,\rho_{0}+h)$,
and we have $G(\hat{\mathrm{z}},\rho(x))=x$. Therefore due to the
invertibility of $G$ we must have 
\[
\hat{\mathrm{z}}=\mathrm{z}(x),\qquad\rho(x)=t(x).
\]
Hence $\rho$ is a $C^{k,\alpha}$ function on $B_{r}(x_{0})$. In
addition, it follows that $x$ has a unique $\rho$-closest point
on $\partial U$ given by $y=Y(\mathrm{z}(x))$, because $\hat{y}=Y(\hat{\mathrm{z}})=Y(\mathrm{z}(x))=y$.
Also, $y$ is a $C^{k,\alpha}$ function of $x$, since $Y,\mathrm{z}$
are $C^{k,\alpha}$ functions.

Next remember that 
\begin{align*}
DG(\mathrm{z},t)=A & :=\begin{bmatrix}w_{1}(\mathrm{z}) & \cdots & w_{n-1}(\mathrm{z}) & D\gamma^{\circ}(\nu_{0})\end{bmatrix}\\
 & =\begin{bmatrix}w_{1}(\mathrm{z}) & \cdots & w_{n-1}(\mathrm{z}) & D\gamma^{\circ}(\nu)\end{bmatrix}\negthickspace,
\end{align*}
where $\nu=\nu(Y(\mathrm{z}))$ (note that $D\gamma^{\circ}(\nu)=D\gamma^{\circ}(\nu_{0})$
by Lemma \ref{lem:g0_diff_on_N}). Similarly
to the calculation of $\det DG(0,\rho_{0})$, we can show that $A$
is invertible. Therefore we have 
\[
DG^{-1}(x)=A^{-1}.
\]
It is easy to see that the $n$th row of $A^{-1}$ is $\frac{1}{\gamma^{\circ}(\nu)}\nu$,
since the product of this row with every column $w_{j}$ is zero as
$\nu$ is orthogonal to $w_{j}$'s, and the product of this row with
the column $D\gamma^{\circ}(\nu)$ is 1 by (\ref{eq:Euler_formula}).
In addition, note that $Dt(x)$ is the $n$th row of $DG^{-1}$; therefore
(noting that $\rho(x)=t(x)$) 
\[
D\rho(x)=Dt(x)=\frac{\nu}{\gamma^{\circ}(\nu)}=\mu(y),
\]
where $y=Y(\mathrm{z}(x))$ is the unique $\rho$-closest point on
$\partial U$ to $x$. Also note that when $i<n$, the $i$th component
of $G^{-1}$ is $\mathrm{z}_{i}$. Hence the $i$th row of $DG^{-1}$
is $D\mathrm{z}_{i}$. On the other hand, the $i$th row of $DG^{-1}$
is equal to $e_{i}^{T}A^{-1}$, where $e_{i}$ is the $i$th standard
basis (column) vector in $\R^{n}$. 
So we have $D\mathrm{z}=\tilde{I}A^{-1}$, where $\tilde{I}$ is the
$(n-1)\times n$ matrix whose $i$th row is $e_{i}^{T}$. 

Now we have $Dy(x)=DY(\mathrm{z})D\mathrm{z}(x)$. On the other hand
we know that $DY=\begin{bmatrix}w_{1} & \cdots & w_{n-1}\end{bmatrix}$,
i.e. the $j$th column of $DY$ is the $j$th column of $A$, for
$j<n$. Then it is easy to check that $DY(\mathrm{z})\tilde{I}=A\hat{I}$,
where $\hat{I}$ is the $n\times n$ matrix whose first $n-1$ columns
are the same as $I$, and its $n$th column is $0$. Next note
that the $n$th row of $A^{-1}$ is $\frac{1}{\gamma^{\circ}(\nu)}\nu$.
Hence we have 
\begin{align*}
Dy=DYD\mathrm{z} & =DY\tilde{I}A^{-1}=A\hat{I}A^{-1}=A\big(I-\begin{bmatrix}0 & \cdots & 0 & e_{n}\end{bmatrix}\big)A^{-1}\\
 & =I-A\begin{bmatrix}0 & \cdots & 0 & e_{n}\end{bmatrix}A^{-1}=I-\begin{bmatrix}0 & \cdots & 0 & Ae_{n}\end{bmatrix}A^{-1}\\
 & =I-\begin{bmatrix}0 & \cdots & 0 & D\gamma^{\circ}(\nu)\end{bmatrix}A^{-1}=I-\frac{1}{\gamma^{\circ}(\nu)}D\gamma^{\circ}(\nu)\otimes\nu=I-X,
\end{align*}
as wanted. 

Finally note that 
\[
D\rho(x)=\mu(y)=\frac{\nu(y)}{\gamma^{\circ}(\nu(y))}=\frac{Dd(y)}{\gamma^{\circ}(Dd(y))}=\frac{Dd(y(x))}{\gamma^{\circ}\big(Dd(y(x))\big)}.
\]
Thus by differentiating this equality we obtain 
\begin{align*}
D^{2}\rho(x) & =\Big[\frac{1}{\gamma^{\circ}(\nu)}I-\frac{1}{(\gamma^{\circ}(\nu))^{2}}Dd(y)\otimes D\gamma^{\circ}(\nu)\Big]D^{2}d(y)Dy(x)\\
 & =\frac{1}{\gamma^{\circ}(\nu)}\Big[I-\frac{1}{\gamma^{\circ}(\nu)}\nu(y)\otimes D\gamma^{\circ}(\nu)\Big]D^{2}d(y)(I-X)\\
 & =\frac{1}{\gamma^{\circ}(\nu)}(I-X^{T})D^{2}d(y)(I-X),
\end{align*}
which is the desired formula for $D^{2}\rho(x)$.
\end{proof}
\begin{proof}[\textbf{Proof of Theorem \ref{thm:rho_is_C2_at_y}}]
 We will show that $\rho$ has a $C^{k,\alpha}$ extension to an
open neighborhood of $y$. Note that if we consider $\rho$ as a function
on all of $\R^{n}$, then it is not differentiable on $\partial U$.
However, we will show that the following extension of $\rho$, which
can be considered a signed version of $\rho$ on $\R^{n}$, is $C^{k,\alpha}$
on a neighborhood of $y$: 
\[
\rho_{s}(x):=\begin{cases}
\rho(x) & \textrm{if }x\in\overline{U},\\
-\bar{\rho}(x) & \textrm{if }x\in\R^{n}-\overline{U}.
\end{cases}
\]
Here $\bar{\rho}:=d_{-K}(\cdot,\partial U)$ (note that $\gamma_{-K}(\cdot)=\gamma(-\,\cdot)$).
Notice that when $x\in\partial U$ we have $-\bar{\rho}(x)=0=\rho(x)$.
So in particular, $\rho_{s}$ is a continuous function. In addition,
note that $\partial(\R^{n}-\overline{U})=\partial U$, but the inward
unit normal to $\partial(\R^{n}-\overline{U})$ is $-\nu$. Now note
that the gauge function of $(-K)^{\circ}=-K^{\circ}$ is $\gamma^{\circ}(-\,\cdot)$.
Thus if we incorporate this in (\ref{eq:Parametrize_by_rho}), we
obtain 
\begin{equation}
x=y+\bar{\rho}(x)\,\big(-D\gamma^{\circ}(-(-\nu))\big)=y-\bar{\rho}(x)\,D\gamma^{\circ}(\nu),\label{eq:Parametrize_by_-rho}
\end{equation}
provided that $x\in\R^{n}-\overline{U}$ has $y$ as its $\bar{\rho}$-closest
point on $\partial U$ and $\gamma^{\circ}$ is differentiable at
$\nu=\nu(y)$. Also note that the derivative of the gauge function
of $(-K)^{\circ}$ is $-D\gamma^{\circ}(-\,\cdot)$. 

Let $\mathrm{z}\mapsto Y(\mathrm{z})$ be a $C^{k,\alpha}$ parametrization
of $\partial U$ around $Y(0)=y_{0}$, where $\mathrm{z}$ varies
in an open set $V\subset\R^{n-1}$. Consider the map $G:V\times\R\to\R^{n}$
defined by 
\[
G(\mathrm{z},t):=Y(\mathrm{z})+\frac{t}{\rho(x_{0})}(x_{0}-y_{0}).
\]
Note that $G$ is a $C^{k,\alpha}$ function. Also note that we have
$G(0,0)=y_{0}$. Then similarly to the proof of Theorem \ref{thm:rho_is_C^2},
we can show that $\det DG(0,0)\ne0$, and by the inverse function
theorem, $G$ is invertible on an open set of the form $W\times(-h,h)$,
and it has a $C^{k,\alpha}$ inverse on $B_{r}(y_{0})$. Now we know
that $G:(\mathrm{z},t)\mapsto x$ has an inverse, denoted by $\mathrm{z}(x),t(x)$,
where $\mathrm{z}(\cdot),t(\cdot)$ are $C^{k,\alpha}$ functions
of $x$. Let $y:=Y(\mathrm{z}(x))$. Then we have 
\[
x=G(\mathrm{z}(x),t(x))=y+t(x)\,D\gamma^{\circ}(\nu_{0}).
\]
On the other hand, (\ref{eq:Parametrize_by_rho}) and (\ref{eq:Parametrize_by_-rho})
imply that 
\[
x=\hat{y}+\rho_{s}(x)\,D\gamma^{\circ}(\nu(\hat{y})),
\]
where $\hat{y}$ is one of the $\rho$-closest or $\bar{\rho}$-closest
points on $\partial U$ to $x$ (depending on whether $x\in\overline{U}$
or $x\in\R^{n}-\overline{U}$; note that when $x=\hat{y}\in\partial U$
the equation holds trivially). Then similarly to the proof of Theorem
\ref{thm:rho_is_C^2} we can show that 
\[
x=\hat{y}+\rho_{s}(x)\,D\gamma^{\circ}(\nu_{0})=Y(\hat{\mathrm{z}})+\rho_{s}(x)\,D\gamma^{\circ}(\nu_{0})
\]
for some $\hat{\mathrm{z}}\in W$. Hence $(\hat{\mathrm{z}},\rho_{s}(x))\in W\times(-h,h)$,
and we have $G(\hat{\mathrm{z}},\rho_{s}(x))=x$. Therefore due to
the invertibility of $G$ we must have 
\[
\hat{\mathrm{z}}=\mathrm{z}(x),\qquad\rho_{s}(x)=t(x).
\]
Hence $\rho_{s}$ is a $C^{k,\alpha}$ function on $B_{r}(y_{0})$.
Thus $\rho$ is a $C^{k,\alpha}$ function on $\overline{U}\cap B_{r}(y_{0})$.
In addition, it follows that every $x\in\overline{U}\cap B_{r}(y_{0})$
has a unique $\rho$-closest point on $\partial U$ given by $Y(\mathrm{z}(x))$,
because $\hat{y}=Y(\hat{\mathrm{z}})=Y(\mathrm{z}(x))$. 

Next, for $y\in Y(W)\cap B_{r}(y_{0})\subset\partial U$ consider
the line segment 
\[
t\mapsto y+tD\gamma^{\circ}(\nu(y))=y+tD\gamma^{\circ}(\nu_{0}),
\]
where $t\in(0,\tilde{h})$. If $\tilde{h}>0$ is small enough then
this segment lies inside $U\cap B_{r}(y_{0})$, since we know that
$\big\langle D\gamma^{\circ}(\nu),\nu\big\rangle=\gamma^{\circ}(\nu)>0$.
Now similarly to the last paragraph we can show that if $x$ belongs
to this segment, then $y$ is the unique $\rho$-closest point on
$\partial U$ to $x$. Hence $\rho$ is differentiable at $x$ and
similarly to the proof of Theorem \ref{thm:rho_is_C^2}
we can show that $D\rho(x)=\nu(y)/\gamma^{\circ}(\nu)$. Hence if
we let $x$ approach $y$ along this segment we get 
\[
D\rho(y)=\lim_{x\to y}D\rho(x)=\lim_{x\to y}\nu(y)/\gamma^{\circ}(\nu)=\nu(y)/\gamma^{\circ}(\nu),
\]
because $D\rho$ is continuous. In addition, when $k\ge2$, by the
continuity of $D^{2}\rho$ on $\overline{U}\cap B_{r}(y_{0})$ we
get 
\begin{align*}
D^{2}\rho(y)=\lim_{x\to y}D^{2}\rho(x) & =\lim_{x\to y}\frac{1}{\gamma^{\circ}(\nu)}(I-X^{T})D^{2}d(y)(I-X)\\
 & =\frac{1}{\gamma^{\circ}(\nu)}(I-X^{T})D^{2}d(y)(I-X),
\end{align*}
where the formula for $D^{2}\rho(x)$ can be obtained similarly to
the proof of Theorem \ref{thm:rho_is_C^2}.
\end{proof}

\section{Examples}

\begin{example}

Let us start with a two-dimensional example and compute the distance to the parabola $x_2=x_1^2$ with respect to the maximum norm $\gamma(x)=|x|_{\infty}=\max \{ |x_{1}| , |x_{2}| \}$. In this
case $K$ is the square $\{x\in\mathbb{R}^{2}:-1\le x_{1} , x_2\le1 \}$, and we will denote $\rho$ by $d_\infty$. Applying Lemma \ref{lem:K_ball_touch_bdry} we see that for $x$ above the parabola its $d_\infty$-closest point on the parabola must be a lower corner of the square $K_x$, since by \eqref{eq:-nu_in_Kx} the outward normal vector $-\nu(y)$ to the parabola must belong to the normal cone $N(K_x,y)$, and the lower corners of $K_x$ are the only points whose corresponding normal cones contain an outward normal to the parabola. Also note that no square $K_x$ above the parabola can touch the parabola only at $y=0$ due to its strict convexity. Similarly, for $x$ below the parabola, its $d_\infty$-closest point on the parabola must be an upper corner of the square $K_x$ or belong to its upper side; see Figure \ref{fig:parabola}.

Now for $x$ above the parabola (i.e.\@ $x_2 > x_1 ^2$), the lower corners of $K_x$ are 
\[
\big( x_1 \pm d_\infty (x), \, x_2 - d_\infty (x) \big).
\]
Hence we must have 
\[
 x_2 - d_\infty (x) = \big( x_1 \pm d_\infty (x) \big)^2,
\]
where $+$ is chosen when $x_1 >0$ and $-$ is chosen when $x_1 <0$ (when $x_1 =0$ we can choose both). 
Therefore 
\[
(d_\infty  (x))^2 +(1 + 2|x_1|) d_\infty (x) + x_1^2 -x_2 =0.
\]
Thus we get 
\[ d_\infty (x) = \frac{1}{2} \Bigl(  -1 - 2|x_1| + \sqrt{ 4 |x_1| + 4 x_2 + 1} \Big)  =   \sqrt{  |x_1| +  x_2 + \frac{1}{4}}  - |x_1| - \frac{1}{2}.
\]
On the other hand, for $x$ below the parabola (i.e.\@ $x_2 < x_1 ^2$), the upper corners of $K_x$ are 
\[
\big( x_1 \mp d_\infty (x) , \, x_2 + d_\infty (x) \big).
\]
But in this case $y$ can also be on the upper side of $K_x$. Let us characterize those points $x$ for which this is the case. Note that for such $y$ the inward normal to the parabola (viewed from its exterior) is $\nu (y)=(0,-1)$. Hence we must have $y=(0,0)$. Then by \eqref{eq:K-normal} and \eqref{eq:dg} we obtain  
\[
\frac{x-y\;\;}{|x-y|_\infty} \in \partial \gamma^\circ (\nu(y)) = \partial \gamma^\circ ( (0,-1))  = N(K^\circ , (0,-1) ) \cap \partial K, 
\]
where $\gamma^\circ$ is the 1-norm $\gamma^{\circ}(x)=|x|_{1}=|x_1|+|x_2|$ and $K^\circ$ is the rhombus $\{|x_1|+|x_2| \le 1 \}$. But $N(K^\circ , (0,-1) ) \cap \partial K$ is just the lower side of $K$. So we must have 
\[
\frac{x-y\;\;}{|x-y|_\infty} \in \{ (t,-1) : -1 \le t \le 1 \}.
\]
Since $y=(0,0)$ we get 
\[
\frac{(x_1,x_2)}{\max \{ |x_1|,|x_2| \}} \in \{ (t,-1) : -1 \le t \le 1 \},
\]
which holds if and only if $|x_1| \le |x_2|$ and $x_2 <0$. In this region we have 
\[
d_\infty (x) = |x-y|_\infty = |x|_\infty = |x_2| = -x_2.
\]
Below the parabola and outside this region we can compute $d_\infty (x)$ as before to get 
\[ d_\infty (x)  = -  \sqrt{  |x_1| +  x_2 + \frac{1}{4}}  + |x_1| + \frac{1}{2}.
\]
Note that the smaller positive root of the corresponding quadratic equation gives the correct value for $d_\infty (x)$, since on the parabola we must have $d_\infty =0$. Therefore we have 
\begin{equation}
d_{\infty}(x)=\begin{cases}
\sqrt{  |x_1| +  x_2 + \frac{1}{4}}  - |x_1| - \frac{1}{2} & \textrm{if }x_2 > x_1^2,\\
\\ -x_2 & \textrm{if } |x_1| \le -x_2,
\\ \\
 -\sqrt{  |x_1| +  x_2 + \frac{1}{4}}  + |x_1| + \frac{1}{2} & \textrm{otherwise}.
\end{cases}\label{eq:d_infty-parabola}
\end{equation}
Note that there is no point satisfying $|x_1|\le -x_2$ when $x_2 >0$, so, in the above formula, we do not need to explicitly require $x_2 \le 0$ in the description of the second region.

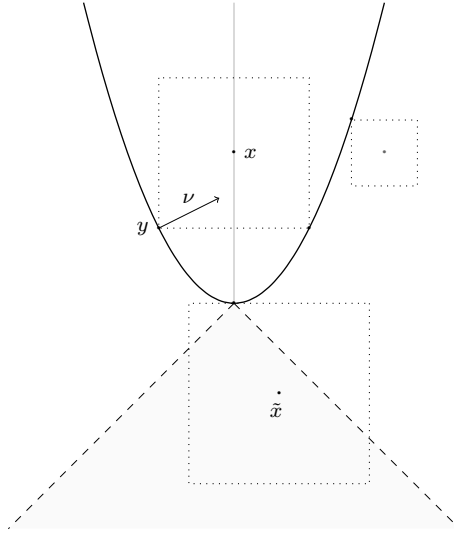
\begin{figure}
\begin{tikzpicture}

\draw[black, line width = 0.5pt]   plot[smooth,domain=-2:2] (\x, {(\x)^2});

\draw[color=black,dotted] (-1,3) rectangle (1,1);
\draw (0,2) node[anchor=center] {$\cdot$} node[anchor=west] {$\scriptstyle{x}\;$}; 
\draw (-1,1) node[anchor=center] {$\cdot$} node[anchor=east] {$\scriptstyle{y}$}; 
\draw (1,1) node[anchor=center] {$\cdot$};

\draw [line width=0.1pt, ->] (-1,1) -- (-0.2,1.4);
\draw (-0.6,1.2) node[anchor=south] {$\scriptstyle{\nu}$};

\draw[color=black,dotted] (1.5616,2.4384) rectangle (2.4384,1.5616);
\draw (2,2) node[anchor=center, opacity=0.6] {$\cdot$}; 
\draw (1.5616,2.4384) node[anchor=center] {$\cdot$};

\draw[color=black,dotted] (1.8,0) rectangle (-0.6,-2.4);
\draw (0.6,-1.2) node[anchor=center] {$\cdot$} node[anchor=north] {$\scriptstyle{\tilde{x}}\;$}; 
\draw (0,0) node[anchor=center] {$\cdot$};

\draw [line width=0.1pt, opacity = 0.3] (0,0) -- (0,4);

\draw [line width=0.2pt, dashed] (0,0) -- (3,-3);
\draw [line width=0.2pt, dashed] (0,0) -- (-3,-3);
\fill [black,fill opacity=0.02] (0,0) -- (3,-3) -- (-3,-3) -- cycle;

\end{tikzpicture}

\caption{\protect\label{fig:parabola} The different regions in the plane over
which $d_{\infty}(x)$ has different formulas. 
The point $y$ is the $d_{\infty}$-closest point to $x$, and $\nu$ is the normal to the parabola. Note that the intersection of $K_{\tilde{x}}$ with the parabola is
at a non-vertex point of $\partial K_{\tilde{x}}$.}
\end{figure}

Let us also compute the derivative of the corresponding signed distance function (which is equal to $-d_\infty$ on the exterior of the parabola, i.e.\@ below it) 
\begin{equation}
D(d_{\infty})_s(x)=\begin{cases}
\Big( \frac{x_1}{2|x_1|}\big( |x_1| +  x_2 + \frac{1}{4}\big)^{-\frac{1}{2}}  - \frac{x_1}{|x_1|} ,\;  \frac{1}{2}\big( |x_1| +  x_2 + \frac{1}{4}\big)^{-\frac{1}{2}}  \Big)
& \textrm{if }x_2 > x_1^2 \textrm{ and } x_1 \ne 0,\\
\\ (0,1) & \textrm{if } |x_1| \le -x_2,
\\ \\
 \Big( \frac{x_1}{2|x_1|}\big( |x_1| +  x_2 + \frac{1}{4}\big)^{-\frac{1}{2}}  - \frac{x_1}{|x_1|} ,\;  \frac{1}{2}\big( |x_1| +  x_2 + \frac{1}{4}\big)^{-\frac{1}{2}}  \Big) & \textrm{if } x_2 \le x_1 ^2 \textrm{ and } |x_1|> -x_2 .
\end{cases}\label{eq:Dd_infty-parabola}
\end{equation}
Note that the signed distance function is differentiable everywhere except when $x_1 =0$ and $x_2 > 0$, corresponding to the points with more than one closest point on the parabola, which is compatible with the case of distance functions corresponding to smooth convex sets. However, unlike the smooth case, it may happen that the distance function is differentiable at points with more than one closest point on the boundary, as we will see in Example \ref{exterior-2-balls}. In addition, unlike the distance functions corresponding to smooth convex sets, here the set of nondifferentiability touches the boundary. 
Furthermore, note that although the first derivative is continuous across the boundary of the region $|x_1|\le -x_2$,  the second derivative does not exist there. 
\end{example}

\begin{example}
Next, let us compute the distance to the unit sphere $\partial B_{1}(0)$
with respect to the gauge function $\gamma$ corresponding to a polytope
$K$. We assume that the vertices of $K$ are equidistant from the
origin, i.e.\@ $K$ is inscribed in the sphere $\partial B_{r}(0)$
for some $r>0$. 

Let $x\in B_{1}$. Consider the convex set $K_{x}=x-\rho(x)K$. Then
by Lemma \ref{lem:K_ball_touch_bdry} we
know that $\mathrm{int}(K_{x})\subset B_{1}$, and $\partial K_{x}\cap\partial B_{1}$
is the set of $\rho$-closest points on $\partial B_{1}$ to $x$.
Let $y\in\partial K_{x}\cap\partial B_{1}$. Then $y$ must be a vertex
of $K_{x}$. Otherwise, a line segment passing through $y$ lies in
$\partial K_{x}$, and this line segment cannot touch $\partial B_{1}$
and stay inside $\overline{B}_{1}$ at the same time, due to the strict convexity of
$B_{1}$; so the line segment will intersect the exterior of the unit
ball $B_{1}$, which is a contradiction. 

Now we have $y=x-\rho(x)z$ for some $z\in\partial K$. Note that
$z$ is a vertex of $K$; so we have $z\in\partial B_{r}$ by our
assumption. Hence 
\begin{equation}
1=|y|^{2}=|x-\rho(x)z|^{2}=|x|^{2}-2\rho(x)\langle x,z\rangle+\rho(x)^{2}|z|^{2}=|x|^{2}-2\rho(x)\langle x,z\rangle+\rho(x)^{2}r^{2}.\label{eq:1_in_examp_1}
\end{equation}
On the other hand, for any other vertex $\tilde{z}\in\partial K$
we have $x-\rho(x)\tilde{z}\in K_{x}\subset B_{1}$; thus 
\[
1\ge|x-\rho(x)\tilde{z}|^{2}=|x|^{2}-2\rho(x)\langle x,\tilde{z}\rangle+\rho(x)^{2}|\tilde{z}|^{2}=|x|^{2}-2\rho(x)\langle x,\tilde{z}\rangle+\rho(x)^{2}r^{2}.
\]
Therefore we must have $\langle-x,z\rangle\ge\langle-x,\tilde{z}\rangle$.
But, since all points of the convex polytope $K$ are convex combinations
of its vertices, we have 
\[
\langle-x,z\rangle\ge\sup_{\tilde{x}\in K}\langle-x,\tilde{x}\rangle=\gamma^{\circ}(-x),
\]
where the last equality follows from (\ref{eq:g0_is_h}). Hence $-\langle x,z\rangle=\langle-x,z\rangle=\gamma^{\circ}(-x)$.
Plugging this in (\ref{eq:1_in_examp_1}) we obtain 
\begin{align*}
|x|^{2}+2\rho(x)\gamma^{\circ}(-x) & +\rho(x)^{2}r^{2}=1\\
 & \implies\rho(x)=\frac{1}{r^{2}}\Bigl(-\gamma^{\circ}(-x)+\sqrt{\gamma^{\circ}(-x)^{2}+r^{2}-r^{2}|x|^{2}}\,\Big).
\end{align*}
Note that the other root of the quadratic equation is negative and
cannot be equal to $\rho(x)$. It is easy to see that this formula
gives us $\rho(x)=0$ when $x$ is on the unit sphere ($|x|=1$). 

Next suppose $x$ is outside the unit ball, i.e.\@ $|x|>1$. Let $y\in\partial K_{x}\cap\partial B_{1}$
be a $\rho$-closest point on $\partial B_{1}$ to $x$. Then we have
$y=x-\rho(x)z$ for some $z\in\partial K$. But in this case $z$
is not necessarily a vertex of $K$. However, if we further assume
that $z$ is a vertex of $K$, then similarly to the above we have
\[
1=|y|^{2}=|x-\rho(x)z|^{2}=|x|^{2}-2\rho(x)\langle x,z\rangle+\rho(x)^{2}r^{2}.
\]
But this time for any other vertex $\tilde{z}\in\partial K$ we have
$x-\rho(x)\tilde{z}\in K_{x}\subset\R^{n}-B_{1}$; thus 
\[
1\le|x-\rho(x)\tilde{z}|^{2}=|x|^{2}-2\rho(x)\langle x,\tilde{z}\rangle+\rho(x)^{2}r^{2}.
\]
So $\langle x,z\rangle\ge\langle x,\tilde{z}\rangle$, and hence,
similarly to the above we get $\langle x,z\rangle=\gamma^{\circ}(x)$.
Therefore we obtain 
\begin{align*}
|x|^{2}-2\rho(x)\gamma^{\circ}(x) & +\rho(x)^{2}r^{2}=1\\
 & \implies\rho(x)=\frac{1}{r^{2}}\Big(\gamma^{\circ}(x)-\sqrt{\gamma^{\circ}(x)^{2}+r^{2}-r^{2}|x|^{2}}\,\Big).
\end{align*}
Note that we have taken $\rho(x)$ to be the smaller root of the quadratic
equation. (Because for the larger root $s=\frac{1}{r^{2}}\big[\gamma^{\circ}(x)+\sqrt{\gamma^{\circ}(x)^{2}+r^{2}-r^{2}|x|^{2}}\,\big]$,
the point $y=x-\rho(x)z=x-s(\frac{\rho(x)}{s}z)$ is on $\partial B_{1}$
and is also in the interior of $x-sK$, since $\frac{\rho(x)}{s}z\in\mathrm{int}(K)$.
So $s$ cannot be $\rho(x)$ due to Lemma \ref{lem:K_ball_touch_bdry}.)
Notice that for this formula to give a real value for $\rho(x)$
we must have 
\begin{equation}
\gamma^{\circ}(x)^{2}+r^{2}\ge r^{2}|x|^{2}.\label{eq:obstr_y_is_vertex}
\end{equation}
Thus this is an obstruction for $y$ to be a vertex of $K_{x}$, or
in other words, an obstruction for $x$ to have a $\rho$-closest
point which is a vertex of $K_{x}$. (As we will see in the next
example, this obstruction is not sharp, and the actual region of those
points $x$ whose $\rho$-closest point is a vertex of $K_{x}$ can
be smaller.) 

Note that when $x$ is outside the unit ball the $\rho$-closest point
to $x$ must be unique. Because if $\partial K_{x}$ touches $\partial B_{1}$
at two points $y,\tilde{y}$, then the line segment $]y,\tilde{y}[$
will be inside $K_{x}$ and $B_{1}$ due to their convexity, which
is in contradiction with Lemma \ref{lem:K_ball_touch_bdry}.
However, as we have already noted, when $x$ is outside the unit ball,
its $\rho$-closest point $y\in\partial B_{1}$ is not necessarily
a vertex of $K_{x}$. 

So suppose $y$ is not a vertex of $K_{x}$, or equivalently, $z=\frac{x-y}{\rho(x)}$
is not a vertex of $K$. (Note that the following arguments, with
obvious modifications, also work when $z$ is a vertex, although
we have already dealt with this case using a different approach.)
Then by (\ref{eq:K-normal}) we have (noting that the inward unit
normal at $y$ to $\partial B_{1}$, considered as the boundary of
$\mathbb{R}^{n}-B_{1}$, is $y$) 
\[
z=\frac{x-y}{\gamma(x-y)}\in\partial\gamma^{\circ}(\nu(y))=\partial\gamma^{\circ}(y).
\]
Thus by Lemma \ref{lem:x_in_dg(v)} and (\ref{eq:dg})
we get 
\[
\tilde{y}:=\frac{y}{\gamma^{\circ}(y)}\in\partial\gamma(z)=N(K,z)\cap\partial K^{\circ}.
\]
Note that $\tilde{y}$ is a singular point of $\partial K^{\circ}$
by Lemma \ref{lem:normal_to_face}, since
$z$ is not a vertex of $K$. Also note that $|\tilde{y}|=|y|/\gamma^{\circ}(y)=1/\gamma^{\circ}(y)$.
So we obtain $y=\tilde{y}/|\tilde{y}|$. In other words, $y$ belongs
to the image of the singular points of $\partial K^{\circ}$ under
the map 
\[
w\mapsto\frac{w}{|w|}.
\]

Conversely, suppose $y=\tilde{y}/|\tilde{y}|$ for some $\tilde{y}\in\partial K^{\circ}$.
Let us show that $y$ is the $\rho$-closest point on $\partial B_{1}$
to some points in $\mathbb{R}^{n}-B_{1}$. In fact, this is the case
for the points $x=y+tz$ with $t>0$ and $z\in N(K^{\circ},\tilde{y})\cap\partial K$.
We claim that 
\[
L:=x-tK\subset\mathbb{R}^{n}-B_{1}.
\]
To see this note that by (\ref{eq:v_in_N(x)}) we have $\tilde{y}\in N(K,z)$
and thus for every $w\in K$ we have $\langle w-z,\tilde{y}\rangle\le0$;
hence for $w\ne z$ we get 
\begin{align*}
|x-tw|^{2}=|y+t(z-w)|^{2} & =|y|^{2}+2t\langle y,z-w\rangle+t^{2}|z-w|^{2}\\
 & =1-2t|\tilde{y}|\langle\tilde{y},w-z\rangle+t^{2}|z-w|^{2}>1.
\end{align*}
In addition, for $w=z$ we have $y=x-tz\in\partial L\cap\partial B_{1}$.
So by Lemma \ref{lem:K_ball_touch_bdry}
we must have $t=\rho(x)$ and $L=K_{x}$. In particular, $y$ is the
$\rho$-closest point on $\partial B_{1}$ to $x=y+tz$ for every
$t>0$ and $z\in N(K^{\circ},\tilde{y})\cap\partial K$. %

Therefore, putting all these together, for $x\in\mathbb{R}^{n}$ we
have 
\[
\rho(x)=d_{K}(x,\partial B_{1})=\begin{cases}
\frac{1}{r^{2}}\bigl(-\gamma^{\circ}(-x)+\sqrt{\gamma^{\circ}(-x)^{2}+r^{2}-r^{2}|x|^{2}}\,\big) & \textrm{if }|x|\le1,\\
\\\gamma(x-y) & \begin{matrix*}[l] \textrm{if }|x|>1\textrm{ and }x-y\in N(K^{\circ},\tilde{y})\\
  \,\textrm{where }|y|=1\textrm{ and }\tilde{y}=y/\gamma^{\circ}(y)\\
  \,\textrm{is a singular point of }\partial K^{\circ}, \end{matrix*}\\ \\
\frac{1}{r^{2}}\big(\gamma^{\circ}(x)-\sqrt{\gamma^{\circ}(x)^{2}+r^{2}-r^{2}|x|^{2}}\,\big) & \textrm{otherwise}.
\end{cases}
\]
Interestingly, when $K$  is not symmetric with respect to the origin, this formula shows that the signed distance function to $\partial B_1$ with respect to $\gamma$ may not be even once differentiable on $\partial B_1$. This is in contrast to the case of distance functions with respect to smooth norms. Note that outside a set with $(n-1)$-Hausdorff measure zero, we can use (the negative of) the last case of the above formula to evaluate the derivative of the signed distance function on $\partial B_1 $.    
\end{example}
\begin{example}\label{exa:d_infty}
Let us consider a special case of the above example, and compute the
distance to the unit sphere $\partial B_{1}(0)$ with respect to the
maximum norm $\gamma(x)=|x|_{\infty}=\max_{i\le n}|x_{i}|$. In this
case $K$ is the cube $\{x\in\mathbb{R}^{n}:-1\le x_{i}\le1\textrm{ for }i=1,\dots,n\}$.
Note that the cube $K$ is inscribed in the sphere $\partial B_{{\scriptscriptstyle \sqrt{n}}}(0)$.
We also have $\gamma^{\circ}(x)=|x|_{1}=\sum_{i=1}^{n}|x_{i}|$, and
\[
K^{\circ}=\{x\in\mathbb{R}^{n}:|x_{1}|+\dots+|x_{n}|\le1\}.
\]
The $\partial K^{\circ}$ consists of $2^{n}$ facets (the same number
as the vertices of $K$) determined by 
\[
\epsilon_{1}x_{1}+\dots+\epsilon_{n}x_{n}=1,\qquad\epsilon_{i}x_{i}\ge0
\]
for each choice of $\epsilon_{i}\in\{-1,1\}$%
. Note that the vertex $(\epsilon_{1},\dots,\epsilon_{n})$ of $K$
is an outer normal vector to the above facet. The relative interior
of these facets are easily seen to be 
\[
\epsilon_{1}x_{1}+\dots+\epsilon_{n}x_{n}=1,\qquad\epsilon_{i}x_{i}>0.
\]
Hence their relative boundaries, which form the set of singular points
of $\partial K^{\circ}$, are%
{} 
\[
\sum_{i\ne j}\epsilon_{i}x_{i}=1,\;x_{j}=0\quad\textrm{for some }j\in\{1,\dots,n\}.
\]
Note that the union of these sets is the union of the intersections
of $\partial K^{\circ}$ with the hyperplanes $\{x_{j}=0\}$. So the
image of the set of singular points of $\partial K^{\circ}$ under
the map $w\mapsto w/|w|$ is the union of the intersections of $\partial B_{1}$
with the hyperplanes $\{x_{j}=0\}$. 

Now let $z$ be a singular point of $\partial K^{\circ}$ of the form
\begin{align}
 & z_{j}=0\quad\textrm{for }j\in J=\{j_{1},\dots,j_{m}\},\nonumber \\
 & \sum_{i\notin J}\epsilon_{i}z_{i}=1,\quad\epsilon_{i}z_{i}>0\quad\textrm{for }i\notin J.\label{eq:z_sing}
\end{align}
Note that $\epsilon_{i}=z_{i}/|z_{i}|$ is the sign of $z_{i}$. Then
$z$ is in the intersection of the $2^{m}$ facets of $\partial K^{\circ}$
determined by 
\[
\sum_{i\notin J}\epsilon_{i}x_{i}+\sum_{j\in J}\eta_{j}x_{j}=1,\qquad\epsilon_{i}x_{i},\eta_{j}x_{j}\ge0
\]
for each choice of $\eta_{j}\in\{-1,1\}$. By Lemma \ref{lem:normal_to_face}
we have 
\begin{align*}
N(K^{\circ},z)\cap\partial K & =\mathrm{conv}\,\{(\eta_{1},\dots,\eta_{n}):\eta_{j}=\pm1\textrm{ for }j\in J,\textrm{ and }\eta_{i}=\epsilon_{i}\textrm{ for }i\notin J\}\\
 & =\{v:-1\le v_{j}\le1\textrm{ for }j\in J,\textrm{ and }v_{i}=z_{i}/|z_{i}|\textrm{ for }i\notin J\}.
\end{align*}
As shown in the last example, we know that if $x-z/|z|\in N(K^{\circ},z)$
then 
\[
d_{\infty}(x,\partial B_{1})=\rho(x)=\gamma(x-z/|z|)=\bigl|x-z/|z|\bigr|_{\infty}.
\]
Therefore if $x=\frac{z}{|z|}+tv$ where $t\ge0$, $|v_{j}|\le1$
for $j\in J$, and $v_{i}=\frac{z_{i}}{|z_{i}|}$ for $i\notin J$,
then 
\[
d_{\infty}(x,\partial B_{1})=|tv|_{\infty}=t|v|_{\infty}=t.
\]
Note that componentwise we have 
\[
x_{k}=\frac{z_{k}}{|z|}+tv_{k}=\begin{cases}
tv_{k} & k\in J,\\
\frac{z_{k}}{|z|}+t\frac{z_{k}}{|z_{k}|} & k\notin J.
\end{cases}
\]
So for $i\notin J$ we have 
\[
t=1\cdot t=\mathrm{sgn}(z_{i})\frac{z_{i}}{|z_{i}|} t=\mathrm{sgn}(z_{i})\Bigl(t\frac{z_{i}}{|z_{i}|}\Bigr)=\mathrm{sgn}(z_{i})\Bigl(x_{i}-\frac{z_{i}}{|z|}\Bigr).
\]
And for $j\in J$ we have 
\[
|x_{j}|=|tv_{j}|=t|v_{j}|\le t=\mathrm{sgn}(z_{i})\Bigl(x_{i}-\frac{z_{i}}{|z|}\Bigr).
\]
Hence, for any singular point $z\in\partial K^{\circ}$ satisfying
(\ref{eq:z_sing}), over the set 
\begin{align*}
 & \{x:|x|>1,\\
 & \qquad\textrm{and the value of }\mathrm{sgn}(z_{i})\bigl(x_{i}-z_{i}/|z|\bigr)\textrm{ is independent of }i\textrm{ for }i\notin J,\\
 & \qquad\textrm{and }|x_{j}|\le\mathrm{sgn}(z_{i})\bigl(x_{i}-z_{i}/|z|\bigr)\textrm{ for }j\in J\}
\end{align*}
we have $d_{\infty}(x,\partial B_{1})=\mathrm{sgn}(z_{i})\bigl(x_{i}-z_{i}/|z|\bigr)$.

Therefore, in light of the results of the previous example, for $x\in\mathbb{R}^{n}$
we have 
\begin{equation}
d_{\infty}(x,\partial B_{1})=\begin{cases}
\frac{1}{n}\Bigl[-|x|_{1}+\Big(|x|_{1}^{2}+n-n|x|^{2}\Big)^{1/2}\Bigr] & \textrm{if }|x|\le1,\\
\\\mathrm{sgn}(z_{i})\bigl(x_{i}-z_{i}/|z|\bigr) & \begin{matrix*}[l] \textrm{if }|x|>1\textrm{ and }x-z/|z|\in N(K^{\circ},z)\\
  \,\textrm{where }z\textrm{ is a singular point of }\partial K^{\circ}\\
  \,\textrm{satisfying }(\ref{eq:z_sing}), \end{matrix*} \\ \\
\frac{1}{n}\Bigl[|x|_{1}-\Big(|x|_{1}^{2}+n-n|x|^{2}\Big)^{1/2}\Bigr] & \textrm{otherwise}.
\end{cases}\label{eq:d_infty}
\end{equation}

\begin{figure}
\begin{tikzpicture}

\draw [line width=0.3pt,fill=black,fill opacity=0.07] (0.,0.) circle (1.cm); 

\draw[color=black,dotted] (-2.5,-0.5) rectangle (-1,1);
\draw (-1.75,0.25) node[anchor=center] {$\cdot$} node[anchor=north] {$\scriptstyle{x}\;$}; 
\draw (-1,0) node[anchor=center] {$\cdot$} node[anchor=west] {$\scriptstyle{y}$}; 

\draw [line width=0.2pt, dashed] (1,0) -- (4.5,3.5);
\draw (3,0) node[anchor=center] {$\scriptstyle{
d_\infty (x) = |x_1|-1}$}; 
\draw [line width=0.2pt, dashed] (1,0) -- (4.5,-3.5);
\fill [black,fill opacity=0.03] (1,0) -- (4.5,3.5) -- (4.5,-3.5) -- cycle;

\draw [line width=0.2pt, dashed] (-1,0) -- (-4.5,3.5);
\draw [line width=0.2pt, dashed] (-1,0) -- (-4.5,-3.5);
\fill [black,fill opacity=0.03] (-1,0) -- (-4.5,3.5) -- (-4.5,-3.5) -- cycle;

\draw [line width=0.2pt, dashed] (0,1) -- (3.5,4.5);
\draw (0,3) node[anchor=center] {$\scriptstyle{
d_\infty (x) = |x_2|-1}$}; 
\draw [line width=0.2pt, dashed] (0,1) -- (-3.5,4.5);
\fill [black,fill opacity=0.01] (0,1) -- (3.5,4.5) -- (-3.5,4.5) -- cycle;

\draw [line width=0.2pt, dashed] (0,-1) -- (3.5,-4.5);
\draw [line width=0.2pt, dashed] (0,-1) -- (-3.5,-4.5);
\fill [black,fill opacity=0.01] (0,-1) -- (3.5,-4.5) -- (-3.5,-4.5) -- cycle;

\end{tikzpicture}

\caption{\protect\label{fig:2}The different regions in the plane over
which $d_{\infty}(x,\partial B_{1})$ has different formulas. In the
unshaded region and inside the unit disk, $d_{\infty}$ is given by
the last and first cases of the formula (\ref{eq:d_infty}) respectively.
The point $y$ is the $d_{\infty}$-closest point to $x$. Note that
in this case the intersection of $K_{x}$ with $\partial B_{1}$ is
at a non-vertex point of $\partial K_{x}$.}
\end{figure}
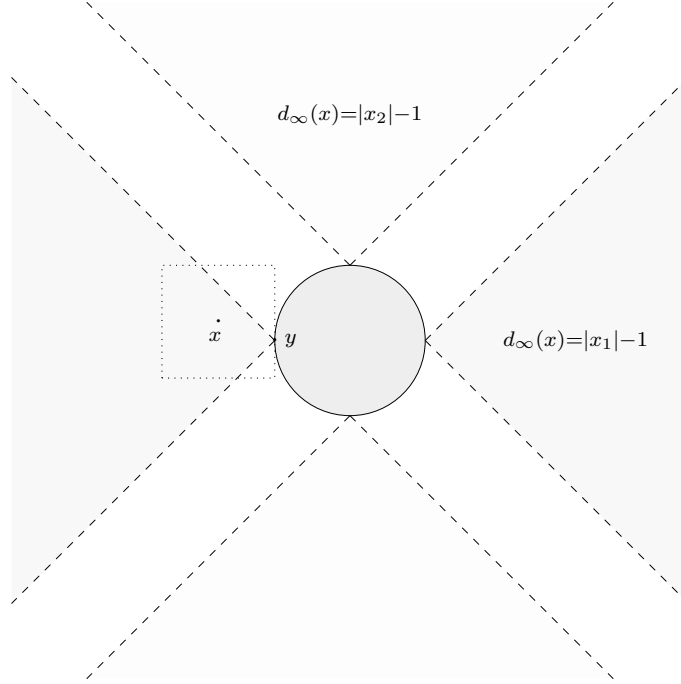

Let us consider the case of $n=2$. In this case $z$ can be one of
the four points $(\pm1,0)$ and $(0,\pm1)$. When $z=(\pm1,0)$, over
the corresponding regions 
\[
\{|x|>1,\;|x_{2}|\le(\pm1)(x_{1}\mp1)\}
\]
we have $d_{\infty}(x,\partial B_{1})=(\pm1)(x_{1}\mp1)=\pm x_{1}-1=|x_{1}|-1$.
Similarly, when $z=(0,\pm1)$, we have $d_{\infty}(x,\partial B_{1})=|x_{2}|-1$
over the corresponding regions. So overall for $n=2$ we have (see
Figure \ref{fig:2}) 
\begin{align*}
d_{\infty}(x,\partial B_{1}) & =\begin{cases}
\frac{1}{2}\Bigl[-|x|_{1}+\Big(|x|_{1}^{2}+2-2|x|^{2}\Big)^{1/2}\Bigr] & \textrm{if }|x|\le1,\\
|x_{1}|-1 & \textrm{if }|x|>1\textrm{ and }|x_{2}|\le|x_{1}|-1,\\
|x_{2}|-1 & \textrm{if }|x|>1\textrm{ and }|x_{1}|\le|x_{2}|-1,\\
\frac{1}{2}\Bigl[|x|_{1}-\Big(|x|_{1}^{2}+2-2|x|^{2}\Big)^{1/2}\Bigr] & \textrm{otherwise},
\end{cases} \allowdisplaybreaks \\
 & =\begin{cases}
\frac{1}{2}\Bigl[-|x_{1}|-|x_{2}|+\Big(2-(|x_{1}|-|x_{2}|)^{2}\Big)^{1/2}\Bigr] & \textrm{if }|x|\le1,\\
|x_{1}|-1 & \textrm{if }|x_{2}|\le|x_{1}|-1,\\
|x_{2}|-1 & \textrm{if }|x_{1}|\le|x_{2}|-1,\\
\frac{1}{2}\Bigl[|x_{1}|+|x_{2}|-\Big(2-(|x_{1}|-|x_{2}|)^{2}\Big)^{1/2}\Bigr] & \textrm{otherwise}.
\end{cases}
\end{align*}
Note that here the obstruction formula (\ref{eq:obstr_y_is_vertex})
for $x$ to have a $\rho$-closest point that is a vertex of $K_{x}$
is not sharp, and the actual region of those points (i.e.\@ the set
of points for which the last case of the above formula holds) is smaller.
Since the formula requires that 
\[
|x|_{1}^{2}+2\ge2|x|^{2}\iff(|x_{1}|-|x_{2}|)^{2}\le2,
\]
while we actually have $(|x_{1}|-|x_{2}|)^{2}\le1$ when $|x|>1$.

Next let us consider the case of $n=3$. In this case $z$ has one
0 component and the other two components are of the form $\pm t$
and $\pm(1-t)$ for some $0\le t\le1$. Consider for example $z=(0,t,1-t)$.
Then the set of points $x$ having $z/|z|$ as their $d_{\infty}$-closest
point is determined by 
\[
x_{2}-\frac{t}{\sqrt{t^{2}+(1-t)^{2}}}=x_{3}-\frac{1-t}{\sqrt{t^{2}+(1-t)^{2}}}=d_{\infty}(x),
\]
and $|x_{1}|\le d_{\infty}(x)$. Hence 
\[
x_{2}-x_{3}=\frac{2t-1}{\sqrt{t^{2}+(1-t)^{2}}}\implies(x_{2}-x_{3})^{2}=\frac{(2t-1)^{2}}{t^{2}+(1-t)^{2}}=2-\frac{1}{t^{2}+(1-t)^{2}}.
\]
Thus $t^{2}+(1-t)^{2}=\frac{1}{2-(x_{2}-x_{3})^{2}}$, and therefore
\[
2t-1=(x_{2}-x_{3})\sqrt{t^{2}+(1-t)^{2}}=\frac{x_{2}-x_{3}}{\sqrt{2-(x_{2}-x_{3})^{2}}}.
\]
So we get 
\begin{align*}
d_{\infty}(x) & =x_{2}-\frac{t}{\sqrt{t^{2}+(1-t)^{2}}}=x_{2}-\frac{\frac{1}{2}+\frac{x_{2}-x_{3}}{2\sqrt{2-(x_{2}-x_{3})^{2}}}}{\frac{1}{\sqrt{2-(x_{2}-x_{3})^{2}}}}\\
 & =x_{2}-\frac{1}{2}\big(\sqrt{2-(x_{2}-x_{3})^{2}}+x_{2}-x_{3}\big)=\frac{1}{2}\big(x_{2}+x_{3}-\sqrt{2-(x_{2}-x_{3})^{2}}\big).
\end{align*}
This is exactly the two-dimensional formula for $d_{\infty}(x,\partial B_{1})$
in the first quadrant of $(x_{2},x_{3})$-plane. 

In fact, the above observation is a manifestation
of a general property of $d_{\infty}(x,\partial B_{1})$. Namely, for a point $x$
in the hyperplane $\{x_{k}=0\}$ that satisfies $|x|>1$ we have 
\[
d_{\infty}^{n}(x,\partial B_{1})=d_{\infty}^{n-1}(\hat{x},\partial\hat{B}_{1}),
\]
where $\hat{x}$ is the projection of $x$ on the hyperplane, $\hat{B}_{1}$
is the unit disk in that hyperplane, and the superscript in $d_{\infty}$
denotes the dimension. This can be easily seen by applying Lemma \ref{lem:K_ball_touch_bdry}, because the intersection of $x-tK$ with the hyperplane is just $\hat{x} -t\hat{K}$, where $\hat{K}$ is the unit "ball" with respect to the maximum norm in $n-1$ dimensions.\footnote{This is a very particular property of the maximum norm and these hyperplanes. Similar conclusions cannot be made for intersections with arbitrary hyperplanes, or for arbitrary $K$.} And if $x-tK$ only touches $\partial B_1$ at one of its boundary points, the same is true about $\hat{x} -t\hat{K}$ and $\partial\hat{B}_{1}$ (which intersect at exactly the same point). This shows that the above property not only holds for points $x$ in the hyperplane, but also for those points $x$ for which $K_x = x - d_\infty (x) K$ intersects the hyperplane, which happens if and only if $|x_k| \le d_\infty (x)$. Because, in this case too, the unique intersection point of $K_x$ and $\partial B_1$, denoted by $y$, must lie in the hyperplane. The reason is that projection on the hyperplane $\{x_{k}=0\}$ does not increase the maximum norm, so $|\hat{y}-\hat{x}|_\infty \le d_\infty (x)$. Hence $|\hat{y}-x|_\infty \le d_\infty (x)$ as $|x_k| \le d_\infty (x)$; thus, noting that $|\hat{y}|\le 1$, we must have $\hat{y}=y$ due to the uniqueness of the intersection point for $|x|>1$. Notice that these points $x$ are exactly the points in the exterior of $B_1$ whose $d_\infty$-closest point on $\partial B_1$ corresponds to a singular point of $\partial K^\circ$. 

We can also iterate the above formula to extend it to higher codimensions. Consequently, when $K_x$ intersects the subspace $\{ x_{k_1}=\dots = x_{k_m}=0 \}$ we have 
\[ 
d_{\infty}^{n}(x,\partial B_{1})=d_{\infty}^{n-m}(\hat{x},\partial\hat{B}_{1}),
\]
where here $\hat{x}$ is the projection of $x$ on the subspace and $\hat{B}_{1}$
is the unit disk in that subspace. Combining these observations with formula \eqref{eq:d_infty} we obtain 
\begin{equation}
d_{\infty}(x,\partial B_{1})=\begin{cases}
\frac{1}{n}\Bigl[-|x|_{1}+\Big(|x|_{1}^{2}+n-n|x|^{2}\Big)^{1/2}\Bigr] & \textrm{if }|x|\le1,\\
\\\frac{1}{n-|J|}\Bigl[|\hat{x}|_{1} - \Big(|\hat{x}|_{1}^{2}+n-|J|-(n-|J|)|\hat{x}|^{2}\Big)^{1/2}\Bigr]  & \begin{matrix*}[l] \textrm{if }|x|>1\textrm{ and } \\ \,|x_j|\le d_\infty (x) \textrm{ for } j \in J \\ \,|x_i|> d_\infty (x) \textrm{ for } i \notin J, \end{matrix*} 
\end{cases}\label{eq:full_d_infty}
\end{equation}
where $\hat{x}$ is the projection of $x$ on the subspace $\{ x_j =0 : j\in J \}$, and $|J|$ is the number of elements of $J$. Note that $J$ can be empty too, but its complement must be nonempty. Also note that 
\[
|\hat{x}_i| =|x_i|> d^n_\infty (x) = d^{n-|J|}_\infty (\hat{x})
\]
implies that the last formula in \eqref{eq:d_infty} should be applied to compute $d^{n-|J|}_\infty (\hat{x})$. 
Furthermore, let us mention that the regions corresponding to each subset of indices $J\subset \{ 1, 2, \dots , n\}$ have a nonempty interior. In other words, the set of points in the exterior of $B_1$ whose $d_\infty$-closest point on $\partial B_1$ corresponds to singular points of $\partial K^\circ$ with some given normal cone dimension has nonempty interior.  

It is worth mentioning that for $x$ outside the unit sphere, $d_\infty (x,\partial B_1)$  can also be calculated by noting that $B_1$  touches $K_x$  at only one point, hence their intersection point is the closest point on $K_x$  to the origin with respect to the Euclidean distance. This idea can also be used to compute the distance to $\partial B_1$ with respect to other norms, provided that we can characterize the projection of the origin on $K_x$ for a given $x$. However, the more systematic approach presented above is better suited to generalization and in principle can  be used to compute the distance functions to the boundary of more general domains.  

Finally, we examine the set of differentiability of $d_\infty (\cdot, \partial B_1 )$. For $|x|<1$  the only points of nondifferentiability are on the hyperplanes $x_i =0$  for some $i$ . These are exactly the points with more than one $d_\infty$-closest point on $\partial B_1$. On the other hand, for $|x| >1$ the points with some $x_i =0$ are not among the points of nondifferentiability, since at these points the formula for $d_\infty$ does not contain $x_i$. It is also easy to see that in the formula $\eqref{eq:full_d_infty}$ for $d_\infty$, the term inside the square root has a positive lower bound (given by $(n-|J|)/2$) in its corresponding domain. So, the only possible points of nondifferentiability when $|x|>1$  are the points on the boundaries of the different regions over which $d_\infty$ is given by different formulas in $\eqref{eq:full_d_infty}$. However, a closer look reveals that the first derivative of $d_\infty$ exists at these points. Since for $j\in J$ we have 
\[
D_j d_\infty  =  \frac{1}{n-|J|}\Bigl[ \mathrm{sgn}(x_j) - \frac{\mathrm{sgn}(x_j)|\hat{x}|_{1}-(n-|J|)x_j}{\Big(|\hat{x}|_{1}^{2}+n-|J|-(n-|J|)|\hat{x}|^{2}\Big)^{1/2}}\Bigr],
\]
which becomes $0$ at $|x_j|=d_\infty (x)$---consistent with the value of $D_j d_\infty $ from the side with $|x_j| > d_\infty$---because at  $|x_j|=d_\infty (x)$ we have  
\[
\Big(|\hat{x}|_{1}^{2}+n-|J|-(n-|J|)|\hat{x}|^{2}\Big)^{1/2} = |\hat{x}|_1 - (n-|J|) d_\infty (x) = |\hat{x}|_1 - (n-|J|)|x_j|.
\]
But the second derivative of $d_\infty $ does not exist at these points, as can be easily seen by differentiating once more with respect to $x_j$. 
\end{example}

\begin{example}  \label{exterior-2-balls} Let us consider the exterior of two adjacent unit disks in $\mathbb{R}^2$, and examine the $d_\infty$ distance to their boundary. To be concrete, let 
\[
U = \{ x \in \mathbb{R}^2  : |x-(1,0)|>1 \textrm{ and } |x-(-1,0)|>1 \}.
\]
Then it is easy to see that when $x_1 =0$  and $x_2>2$ we have 
\[
d_\infty (x, \partial U) = |x_2 -1|.
\]
So although $x$ has two closest points on the boundary, namely $(\pm 1 , 1)$, $d_\infty$ is smooth around $x$. As can be seen in Figure \ref{fig:2-disks}, the two $d_\infty$-closest points $y,y'$ to $x$ lie on the same face of the square $K_x$. 

Note that $U$ can be turned into a domain with smooth boundary by connecting the two disks at their intersection point through a small canal. The same phenomenon still occurs.

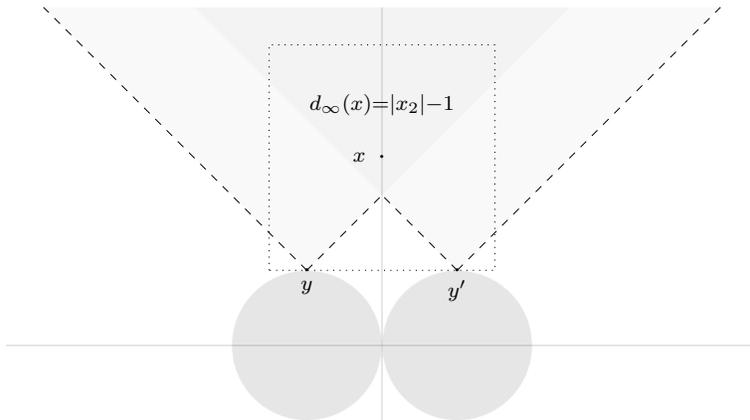
\begin{figure}
\begin{tikzpicture}

\draw [line width=0.2pt, opacity=0.01, fill=black, fill opacity=0.1] (1.,0.) circle (1.cm); 

\draw [line width=0.2pt, opacity=0.01, fill=black, fill opacity=0.1] (-1.,0.) circle (1.cm); 

\draw[color=black,dotted] (-1.5,1) rectangle (1.5,4);
\draw (0,2.5) node[anchor=center] {$\cdot$} node[anchor=east] {$\scriptstyle{x}\;$}; 
\draw (-1,1) node[anchor=center] {$\cdot$} node[anchor=north] {$\scriptstyle{y}$}; 
\draw (1,1) node[anchor=center] {$\cdot$} node[anchor=north] {$\scriptstyle{y'}$}; 


\draw[line width=0.1pt, opacity=0.2] (0,-1) -- (0,4.5);
\draw[line width=0.1pt, opacity=0.2] (-5,0) -- (5,0);

\draw [line width=0.2pt, dashed] (1,1) -- (4.5,4.5);
\draw [line width=0.2pt, dashed] (1,1) -- (0,2);
\fill [black,fill opacity=0.025] (1,1) -- (4.5,4.5) -- (-2.5,4.5) -- cycle;

\draw [line width=0.2pt, dashed] (-1,1) -- (0,2);
\draw [line width=0.2pt, dashed] (-1,1) -- (-4.5,4.5);
\fill [black,fill opacity=0.025] (-1,1) -- (2.5,4.5) -- (-4.5,4.5) -- cycle;

\draw (0,3.2) node[anchor=center] {$\scriptstyle{d_\infty (x) = |x_2|-1}$}; 

\end{tikzpicture}

\caption{\protect\label{fig:2-disks}The distance function $d_{\infty}(x, \partial U)$ is given by $|x_2|-1$ in the shaded region. In the
 region with darker shade the same formula holds, but here each point has two $d_{\infty}$-closest points on the boundary. 
The points $y,y'$ are the $d_{\infty}$-closest points to $x$. Note that
in this case the intersections of $K_{x}$ with $\partial U$ are
at non-vertex points of $\partial K_{x}$, but lie on the same face of it.}
\end{figure}
\end{example}

\begin{rem*}
    Let us summarize our observations in these examples: 
    \begin{itemize}
        \item[\textrm{(i)}] The distance function may or may not be differentiable at points with more than one closest point on the boundary, in contrast to the case of smooth strictly convex sets. 
        \item [\textrm{(ii)}] The signed distance function may not be even differentiable on the boundary itself, in contrast to the case of smooth strictly convex sets. 
        \item [\textrm{(iii)}] The distance function may fail to be smooth when the face of the polytope corresponding to the closest point changes. This region can be a large subset of the singular set of the distance function. 
    \end{itemize}
\end{rem*}

\bibliographystyle{plainnat-reversed}
\bibliography{Bibliography-Nov-2024.bib}

\def\cprime{$'$} \def\cprime{$'$} \def\cprime{$'$} \def\cprime{$'$}
  \def\cprime{$'$} \def\cprime{$'$} \def\cprime{$'$} \def\cprime{$'$}
\begin{thebibliography}{17}
\providecommand{\natexlab}[1]{#1}
\providecommand{\url}[1]{\texttt{#1}}
\expandafter\ifx\csname urlstyle\endcsname\relax
  \providecommand{\doi}[1]{doi: #1}\else
  \providecommand{\doi}{doi: \begingroup \urlstyle{rm}\Url}\fi

\bibitem[Chru{\'s}ciel et~al.(2002)Chru{\'s}ciel, Fu, Galloway, and
  Howard]{CHRUSCIEL20021}
Chru{\'s}ciel, P.~T., Fu, J.~H., Galloway, G.~J., and Howard, R.
\newblock On fine differentiability properties of horizons and applications to
  {R}iemannian geometry.
\newblock \emph{J. Geom. Phys.}, 41\penalty0 (1):\penalty0 1 -- 12, 2002.

\bibitem[Clarke et~al.(1995)Clarke, Stern, and Wolenski]{clarke1995proximal}
Clarke, F.~H., Stern, R.~J., and Wolenski, P.~R.
\newblock Proximal smoothness and the lower-{$C^2$} property.
\newblock \emph{J. Convex Anal.}, 2\penalty0 (1-2):\penalty0 117--144, 1995.

\bibitem[Crasta and Malusa(2007)]{MR2336304}
Crasta, G. and Malusa, A.
\newblock The distance function from the boundary in a {M}inkowski space.
\newblock \emph{Trans. Amer. Math. Soc.}, 359\penalty0 (12):\penalty0
  5725--5759 (electronic), 2007.

\bibitem[Gilbarg and Trudinger(2001)]{MR1814364}
Gilbarg, D. and Trudinger, N.~S.
\newblock \emph{Elliptic Partial Differential Equations Of Second Order}.
\newblock Classics in Mathematics. Springer-Verlag, Berlin, 2001.

\bibitem[He et~al.(2023)He, Zhao, Hu, Zhu, and Liu]{he2023hierarchical}
He, S., Zhao, W., Hu, C., Zhu, Y., and Liu, C.
\newblock A hierarchical long short term safety framework for efficient robot
  manipulation under uncertainty.
\newblock \emph{Robot. Comput.-Integr. Manuf.}, 82:\penalty0 102522, 2023.

\bibitem[Itoh and Tanaka(2001)]{itoh2001lipschitz}
Itoh, J. and Tanaka, M.
\newblock The {L}ipschitz continuity of the distance function to the cut locus.
\newblock \emph{Trans. Amer. Math. Soc.}, 353\penalty0 (1):\penalty0 21--40,
  2001.

\bibitem[Li and Nirenberg(2005)]{MR2094267}
Li, Y. and Nirenberg, L.
\newblock The distance function to the boundary, {F}insler geometry, and the
  singular set of viscosity solutions of some {H}amilton-{J}acobi equations.
\newblock \emph{Commun. Pure Appl. Math.}, 58\penalty0 (1):\penalty0 85--146,
  2005.

\bibitem[Lions(1982)]{MR667669}
Lions, P.-L.
\newblock \emph{Generalized solutions of {H}amilton-{J}acobi equations},
  volume~69 of \emph{Research Notes in Mathematics}.
\newblock Pitman (Advanced Publishing Program), Boston, Mass.-London, 1982.

\bibitem[Mantegazza and Mennucci(2003)]{mantegazza2003hamilton}
Mantegazza, C. and Mennucci, A.~C.
\newblock Hamilton-{J}acobi equations and distance functions on {R}iemannian
  manifolds.
\newblock \emph{Appl. Math. Optim.}, 47\penalty0 (1):\penalty0 1--25, 2003.

\bibitem[Miura and Tanaka(2024)]{miura2024delta}
Miura, T. and Tanaka, M.
\newblock Delta-convex structure of the singular set of distance functions.
\newblock \emph{Commun. Pure Appl. Math.}, 77\penalty0 (9):\penalty0
  3631--3669, 2024.

\bibitem[Poliquin et~al.(2000)Poliquin, Rockafellar, and
  Thibault]{poliquin2000local}
Poliquin, R., Rockafellar, R., and Thibault, L.
\newblock Local differentiability of distance functions.
\newblock \emph{Trans. Amer. Math. Soc.}, 352\penalty0 (11):\penalty0
  5231--5249, 2000.

\bibitem[Safdari(2015)]{Safdari20151}
Safdari, M.
\newblock The free boundary of variational inequalities with gradient
  constraints.
\newblock \emph{Nonlinear Anal.}, 123-124:\penalty0 1--22, 2015.

\bibitem[Safdari(2017)]{safdari2017shape}
Safdari, M.
\newblock On the shape of the free boundary of variational inequalities with
  gradient constraints.
\newblock \emph{Interfaces Free Bound.}, 19\penalty0 (2):\penalty0 183--200,
  2017.

\bibitem[Safdari(2018)]{MR1}
Safdari, M.
\newblock The regularity of some vector-valued variational inequalities with
  gradient constraints.
\newblock \emph{Commun. Pure Appl. Anal.}, 17\penalty0 (2):\penalty0 413--428,
  2018.

\bibitem[Safdari(2019)]{Paper-4}
Safdari, M.
\newblock The distance function from the boundary of a domain with corners.
\newblock \emph{Nonlinear Anal.}, 181:\penalty0 294--310, 2019.

\bibitem[Safdari(2021)]{SAFDARI202176}
Safdari, M.
\newblock Global optimal regularity for variational problems with nonsmooth
  non-strictly convex gradient constraints.
\newblock \emph{J. Differ. Equ.}, 279:\penalty0 76--135, 2021.

\bibitem[Schneider(2014)]{MR3155183}
Schneider, R.
\newblock \emph{Convex bodies: the {B}runn-{M}inkowski theory}, volume 151 of
  \emph{Encyclopedia of Mathematics and its Applications}.
\newblock Cambridge University Press, Cambridge, expanded edition, 2014.

\end{thebibliography}

\end{document}